    \newcommand{\href}[2]{#2}
\newtheorem{theorem}{Theorem}[section]
\newtheorem{corollary}[theorem]{Corollary}
\newtheorem{lemma}[theorem]{Lemma}
\newtheorem{assumption}[theorem]{Assumption}
\newtheorem{definition}[theorem]{Definition}
\newtheorem{remark}[theorem]{Remark}
\numberwithin{equation}{section}  
\definecolor{myblue}{rgb}{0.2,0.2,0.7}
\definecolor{mygreen}{rgb}{0,0.6,0}
\definecolor{mycyan}{rgb}{0,0.6,0.6}
\definecolor{myred}{rgb}{0.9,0.2,0.2}
\definecolor{mymagenta}{rgb}{0.9,0.2,0.9}
\definecolor{mywhite}{rgb}{1.0,1.0,1.0}
\definecolor{myblack}{rgb}{0.0,0.0,0.0}
\newcommand{\tdot}{\!\cdot\!}
\renewcommand{\div}{{\operatorname{div}}}
\newcommand{\dif}{{\operatorname{d}}}
\newcommand{\R}{{\mathbb R}}       
\newcommand{\cD}{{\mathcal D}}
\newcommand{\cI}{{\mathcal I}}
\newcommand{\cT}{{\mathcal T}}
\newcommand{\cV}{{\mathcal V}}
\DeclareMathAlphabet{\mathpzc}{OT1}{pzc}{m}{it}
\newcommand{\on}{\text{ on }}
\newcommand{\an}{\text{ and }}
\newcommand{\with}{\text{\ with }}
\newcommand{\inn}{\text{ in }}
\newcommand{\tforall}{\text{ for all }}
\newcommand{\ale}{\text{a.e.\,}}
\definecolor{blue}{rgb}{0.2,0.2,0.7}
\definecolor{red}{rgb}{0.7,0.3,0.1}
\definecolor{cyan}{rgb}{0.2,0.5,0.6}
\begin{document}

\title[Discrete comparison principles]
      {Discrete comparison principles for quasilinear elliptic PDE}

\author[S. Pollock]{Sara Pollock}
\email{sara.pollock@wright.edu}

\author[Y. Zhu]{Yunrong Zhu}
\email{zhuyunr@isu.edu}

\address{Department of Mathematics and Statistics\\
         Wright State University\\ 
         Dayton, OH 45435}

\address{Department of Mathematics and Statistics\\
         Idaho State University\\ 
         Pocatello, ID 83209}
\thanks{SP was supported in part by NSF DMS 1719849. YZ was supported in part by NSF DMS 1319110.}

\date{\today}

\keywords{
Discrete comparison principle,
uniqueness,
nonlinear diffusion,
quasilinear partial differential equations
}


\subjclass[2010]{65N30, 35J62, 35J93}

\begin{abstract}
Comparison principles are developed for discrete quasilinear elliptic partial 
differential equations.
We consider the analysis of a class of nonmonotone Leray-Lions problems featuring 
both nonlinear solution and gradient dependence in the principal coefficient, 
and a solution dependent lower-order term.  
Sufficient local and global conditions on the discretization are found for 
piecewise linear finite element solutions to satisfy a comparison principle, 
which implies uniqueness of the solution. For problems without a lower-order term,
our analysis shows the meshsize is only required to be locally controlled, 
based on the variance of the computed solution over each element.
We include a discussion of the simpler semilinear case where a linear algebra 
argument allows a sharper mesh condition for the lower order term.
\end{abstract}

\maketitle

\section{Introduction}\label{sec:intro} 
We consider the finite element approximation of the quasilinear elliptic partial 
differential equation (PDE)
\begin{align}\label{eqn:PDEstrong}
-\div (a(x,u,\nabla u)) + b(x,u) = 0 ~\inn~ 
\Omega \subset \R^d,
\end{align}
where $a(x,\eta,\xi) = A(x,\eta,\xi)\xi$,
for scalar-valued $A: \Omega \times \R \times \R^d \rightarrow \R$, 
              and $b: \Omega \times \R \rightarrow \R$.
The domain $\Omega$ is assumed to be piecewise polygonal for $d = 2$, or an interval
for $d = 1$. The boundary $\partial \Omega$ 
is decomposed into Dirichlet and Neumann parts, where
the Dirichlet part $\Gamma_D \subseteq \partial \Omega$ 
has positive measure in $\R^{d-1}$, and the Neumann part
is given by $\Gamma_N = \partial \Omega \setminus \Gamma_D$.  
The boundary conditions applied to \eqref{eqn:PDEstrong} are either
mixed Dirichlet/Neumann or homogeneous Dirichlet, given by
\begin{align}\label{eqn:BC}
u = 0 \text{ on } \Gamma_D, ~\an~ a(x,u,\nabla u)\cdot n(x) = \psi(x)  
      \text{ on } \Gamma_N,
\end{align} 
for outward facing normal $n$.
The aim of this paper is to extend the discrete comparison principle and uniqueness 
results recently obtained by the authors to a more general class of quasilinear
elliptic equations. 

Significant progress has been made on
developing discrete maximum principles for  
divergence form quasilinear elliptic problems, as in 
\cite{DiKrSc13,JuUn05a,KaKo05,KKK07,WaZh12},
and developing the appropriate conditions on the angles of the mesh for these
results to hold.  
In the nonlinear context, comparison principles rather than maximum principles
for a given equation imply the uniqueness of solutions.
Comparison principles also provide important information such as a natural
ordering of solutions that can be useful in the analysis of numerical solutions.
There are still only few results on discrete comparison principles for
problem \eqref{eqn:PDEstrong}, despite the significant literature on corresponding 
results for continuous problems, {\em e.g.,} 
\cite{BBGK99,BaMu95,DoDuSe71,HlKrMa94,Trudinger74}, 
and  \cite[Chapter 10]{GilbargTrudinger}, and the references therein.  

To our knowledge, the first comparison theorem and global uniqueness result 
for a discrete version of problems in this class that holds as the mesh is refined, 
is that of \cite{AnCh96a}, 
for the equation $-\div(\kappa(x,u)\nabla u) = f(x)$, where
both a uniformly small meshsize and an acuteness condition on the angles of the 
mesh were used.  
Uniqueness of solutions for simplicial and rectangular elements of arbitrary order
with numerical quadrature
was later established in \cite{AbVi12} for this class of problems, 
dependent on a uniformly small meshsize.
The meshsize assumption for $P_1$ elements was relaxed in a comparison theorem
framework in recent work by the current authors in \cite{PoZh17a}, 
where the global meshsize condition was replaced by a local condition
on the maximum variance of the solution over each element, locally limiting
the meshsize where the solution has steep gradients.

The main contributions of the current manuscript are that we now allow a more 
general diffusion coefficient, including a nonlinear dependence on the gradient; 
and, a (nonlinear) solution-dependent lower order term. 
These results allow the determination of whether the solution to a finite element
approximation of \eqref{eqn:PDEstrong} is unique, based only on knowledge of 
problem data, and accessible properties of the computed solution and the mesh.
This information is useful in the analysis of adaptive algorithms 
({\em e.g.,} \cite{Pollock15a,Pollock15b}), and can 
be used to verify the uniqueness of a discrete solution upon numerical convergence.
Importantly, these results hold globally, as opposed to locally, within the neighborhood
of a given solution; and, without {\em a priori} knowledge of the
solution to \eqref{eqn:PDEstrong}.  
\subsection{Problem class}
\label{subsec:class}
The following assumptions on the diffusion coefficient, by means of the function 
$a_i(x,\eta,\xi) = A(x,\eta, \xi)\xi_i$, $i = 1, \ldots, d$, 
for $x \in \Omega$, $\eta \in \R$,
and $\xi \in \R^d$,
are made throughout the remainder of the paper.
\begin{assumption}\label{A1:GT}
Assume $a(x,\eta,\xi)$ and $b(x,\eta)$ are Carath\'eodory functions, 
$C^1$ in $(\eta,\xi)$ (respectively, $\eta$) for almost every (a.e.) 
$x \in \Omega$, and measurable in $x$ for each $(\eta, \xi) \in \R \times \R^d$,
(respectively, for each $\eta \in \R$).
Assume $a$ is elliptic in the following sense.
There is a positive constant $\gamma_a$ with
\begin{align}\label{eqn:A1:elliptic}
\sum_{i = 1}^d \frac{\partial a_i}{\partial \xi_j}(x,\eta,\xi) \zeta_i \zeta_j \ge \gamma_a |\zeta|^2.
\end{align} 
for $\ale x \in \Omega$, and for all $\eta \in \R$, $\xi \in \R^d$ and $\zeta \in \R^d$.
There is a constant $K_\eta > 0$ with
\begin{align}\label{eqn:A1:a_eta}
\left| \frac{\partial A}{\partial \eta}(x,\eta,\xi) \right| \le K_\eta, 
\end{align}
for $\ale x \in \Omega$ and for all $\eta \in \R$ and $\xi \in \R^d$.
Assume $b$ is nondecreasing in $\eta$, and there is a constant $B_\eta \ge 0$ with
\begin{align}\label{eqn:A1:b_eta}
0 \le \frac{\partial b}{ \partial \eta} (x,\eta) \le B_\eta,
\end{align}
for $\ale x \in \Omega$ and $\eta \in \R$.
\end{assumption}
The conditions of Assumption \ref{A1:GT}, used here to show a comparison
theorem and uniqueness of the discrete solution, also satisfy the hypotheses 
of Theorem 10.7 of \cite{GilbargTrudinger}, under condition (ii), which shows 
a comparison theorem for the continuous problem. 

\begin{remark}[Existence of solutions]
\label{rem:unique}
To understand existence of the PDE solution, it is useful to consider the Leray-Lions
and coercivity conditions (see for example \cite[Chapter 2]{CarlLeMontreanu}).
In addition to the Carath\'eodory assumption above, 
the following conditions assure the pseudo-monotonicity of the principal
part of the elliptic operator.  
\begin{enumerate}
\item[(1)] Growth condition: there is a function $k_0(x) \in L{q}(\Omega)$ and 
$c_0 > 0$ with
\[
|A(x,\eta,\xi)\xi_i| \le k_0(x) + c_0 (|\eta|^{p-1} + |\xi|^{p-1}), ~i = 1, \ldots, d,
\]
with $1<p<\infty$ and $1/p + 1/q = 1$.
\item[(2)]  Monotonicity with respect to $\xi$: the coefficients $a_i = A \xi_i$ 
satisfy
\begin{align*}
\sum_{i = 1}^d (A(x,\eta,\xi)\xi_i - A(x,\eta,\bar \xi)\bar \xi_i)(\xi_i - \bar \xi_i)
  > 0,
\end{align*}
for $\ale x \in \Omega,$ all $\eta \in \R$, and for all $\xi, \bar \xi \in \R^d$ with $\xi\ne \bar \xi.$
\item[(3)] 
Coercivity: there is a constant $\nu > 0$ and a function $k(x) \in L^{1}(\Omega)$ with
\[
\sum_{i = 1}^d A(x,\eta,\xi)\xi_i^2 \ge \nu |\xi|^p - k(x),
\]
for $\ale x \in \Omega$, all $\eta \in \R$ and all $\xi \in \R^d$.
\end{enumerate}

Classes of problems satisfying the above conditions are well-studied
in the literature with respect to existence of solutions and their boundedness
properties.  For instance, existence of solutions is shown in 
Chapter II.6 of \cite{Showalter97}, under the strengthened coercivity condition 
and additional growth condition on the lower order term
\begin{align*}
	\sum_{i=1}^{d} A(x,\eta,\xi)\xi_{i}^2 & \ge c_r |\xi|^p - K_r (K(x) + |\eta|^r) \\
b(x,\eta)            & \le K_r(k_0(x) + |\eta|^r), 
\end{align*} 
for $k_0(x)$ from condition (1) above, 
some $1 \le r < p$ and $K(x) \in L^{1}(\Omega)$ (see \cite[Lemma 6.4]{Showalter97}).

Cases where both Assumptions \ref{A1:GT} and 
conditions (1)-(3) above are satisfied are not uncommon.
First, if in addition to Assumption \ref{A1:GT}, there are constants
$0 < \lambda_A \le \Lambda_A$ with  $\lambda_A \le A(x,\eta,\xi) \le \Lambda_A$,  
then conditions (1)-(3) hold with $p = q = 2$.
This includes the case where $a_i(x,\eta,\xi) = A(x,\eta)\xi_i$, 
as in the earlier investigation \cite{PoZh17a}, with  $b\equiv 0$, which features
applications to nonlinear heat conduction, for example \cite{HlKrMa94}.
More generally, these conditions hold if $A(x,\eta,\xi)$ has the
form $A(x,\eta,\xi) = A_0(x,\eta) + A_1(x,\eta)f(|\xi|) + A_2(x)g(|\xi|)$, 
where $A_0$ is bounded away from zero, and $f(|\xi|)$ and  $g(|\xi|)$ satisfy 
appropriate growth conditions.  Problems
of this form will be specifically considered in the discrete two dimensional case. 
\end{remark}

The discrete equations for monotone instances of the above classes, 
those in which the principal coefficent is independent of $\eta$, 
such as the $p$-Laplacian, are analyzed in for instance 
\cite{BaLi93,BDK12,DiKrSc13}, and under stronger monotonicity and Lipschitz assumptions 
in \cite{CoWi17,GaMoZu11}, 
exploiting the variational structure of the problem to establish uniqueness without
a comparison principle. 
A more general approximation strategy using a Hybrid High-Order method is presented in 
\cite{DPiDr17}.  In that setting, strong convergence of the sequence of discrete 
solutions is found as the meshsize 
goes to zero for monotone problems, but the result holds only up to a subsequence
if $a(x,\eta,\xi)$ maintains its $\eta$-dependence, {\em i.e.}, 
for nonmonotone problems (see \cite[Theorem 4.6]{DPiDr17}).
The emphasis of this article is establishing 
verifiable sufficient conditions for the uniqueness of the discrete solution 
for the case where $a(x,\eta,\xi)$ of \eqref{eqn:PDEstrong} 
maintains its $\eta$-dependence, and is not then
monotone (or variational, see \cite{HlKrMa94}),
but rather {\em pseudo-monotone}, as described above.  

The weak form of \eqref{eqn:PDEstrong} is given by integration against test functions 
$v$ which lie in an appropriate subspace 
$V_{0,D} \subset V \subseteq H^{1}\cap W^{{1,p}}$,
where
$V_{0,D} = \{v \in V ~\big|~ v = 0 \text{ on } \Gamma_D\}$, and $p$ is 
determined by the particular problem class, as in
Remark \ref{rem:unique}. The reader is referred to 
\cite[\S 3.2]{CarlLeMontreanu} for detailed discussion on the existence and comparison results for the continuous Dirichlet problem.
Then, the weak form of the problem is: find $u \in V_{0,D}$ such that
\begin{align}\label{eqn:PDEweak}
\int_{\Omega} a(x,u,\nabla u) \cdot \nabla v + b(x,u)v \, \dif x
= \int_{\Gamma_N}\psi(x) v \, \dif s,
~\tforall v \in V_{0,D}, 
\end{align}
where the Neumann data $\psi(x)$ is assumed to be bounded and 
measurable.
For the remainder of the paper, we proceed with conditions of Assumptions \ref{A1:GT},
and investigate the conditions under which a discrete comparison principle holds, assuming
the existence of a discrete subsolution and supersolution, as defined in the 
next section. 

The remainder of the article is structured as follows.  In \S \ref{sec:comparison}, 
we state the discretization, and introduce the framework for proving the 
discrete comparison principle. In \S \ref{sec:1D}, this framework is applied to
the simple case of the one dimensional problem.  Then, in \S \ref{sec:2D}, the
two dimensional problem is considered.  First, additional restrictions on the
discretization (angle conditions) are introduced. 
Then, in \S \ref{subsec:2Dsetup}, useful estimates for the technical lemmas of 
\S \ref{subsec:2Dlemmas} are reviewed.  The main 2D result, Theorem
\ref{thm:2Dcompare}, follows in \S \ref{subsec:2Dcompare}.
In \S \ref{sec:semilinear} we prove a comparison principle for a simpler semilinear problem
based on the previous estimates.
In Theorem \ref{thm:semi}, we then apply a linear algebraic approach to improve the mesh condition.
\section{Overview of comparison framework}\label{sec:comparison}
We next overview the discretization and the comparison theorem framework. 
The subsequent sections contain the precise results and technical proofs.
The cases of one and two dimensions are worked out separately to give explicit 
constants that can be used as criteria for verifying uniqueness of a discrete 
solution on a given mesh.

\subsection{Discretization}
\label{subsec:discretization}
Let $\cT$ be a conforming partition of domain $\Omega$ that exactly captures the 
boundary of $\Omega$, and each of $\Gamma_D$ and $\Gamma_N$.  In one dimension,
$\cT$ is a collection of intervals, and in two dimensions a triangulation. 
Let $\overline \cD$ be the collection of vertices or nodes of $\cT$, and let
$\cD =\overline \cD \setminus \Gamma_D$.
The nodes $a \in \cD$ correspond to the mesh degrees of freedom.  
Let $\cV \coloneqq \cV_{0,D} \subset V_{0,D}$ be the discrete space spanned by
the piecewise linear basis functions $\{\varphi_j\}$ that satisfy 
$\varphi_i(a_j) = \delta_{ij}$ for each $a_j \in \cD$.  

For simplicity of defining the finite element solution space, the discussion assumes a 
a homogeneous Dirichlet part under either the mixed or pure Dirichlet conditions.
The method of the proof trivially generalizes to allow nonhomogeneous 
bounded measurable Dirichlet data, as its contribution is subtracted off
as is the Neumann data, on the first step.

\subsection{Discrete comparison framework}
\label{subsec:framework}
The discrete Galerkin problem for $\cV$ is: find $u \in \cV$
such that 
\begin{align}\label{eqn:PDEdiscrete}
\int_{\Omega} a(x,u,\nabla u) \cdot \nabla v + b(x,u)v \, \dif x
  = \int_{\Gamma_N}\psi(x) v \, \dif s,
~\tforall v \in \cV. 
\end{align}
A \emph{subsolution} to \eqref{eqn:PDEdiscrete} is a function $u_1 \in \cV$ with 
\begin{align}\label{eqn:subsoln}
\int_{\Omega} a(x,u_1,\nabla u_1) \cdot \nabla v + b(x, u_1)v \, \dif x- 
  \int_{\Gamma_N}\psi(x) v \, \dif s\le 0,
\end{align}
for all $v \in \cV^+ = \{v \in \cV ~\big|~ v \ge 0\}$. 
A corresponding \emph{supersolution} $u_2 \in \cV$ is given by
\begin{align}\label{eqn:supsoln}
\int_{\Omega} a(x,u_2,\nabla u_2) \cdot \nabla v + b(x,u_2)v \, \dif x 
  - \int_{\Gamma_N}\psi(x) v \, \dif s \ge 0,
~\tforall v \in \cV^+.
\end{align}
Subtracting \eqref{eqn:supsoln} from \eqref{eqn:subsoln}, we find
\begin{align}\label{eqn:subsup}
\int_{\Omega} (a(x,u_1,\nabla u_1) - a(x,u_2,\nabla u_2)) \cdot \nabla v 
  + (b(x, u_1)v - b(x,u_2) )v \, \dif x\le 0,
\end{align}
for all $v \in \cV^+$. 
Decomposing the principal part by $a(x,u,\nabla u) = A(x,u,\nabla u)\nabla u$, 
and applying Taylor's theorem, 
it holds for $w = u_1 - u_2$ that
\begin{align}\label{eqn:decomp_a}
&(a(x, u_1, \nabla  u_1) - a(x, u_1, \nabla u_2)) 
 + (A(x, u_1, \nabla  u_2)  - A(x, u_2, \nabla u_2)) \nabla u_2 
\nonumber \\ 
& = \int_{0}^1\frac{\partial a}{\partial \xi}(x, u_1, \nabla z(t)) 
  \nabla w\, \dif t
 + \int_{0}^1\frac{\partial A}{\partial \eta }(x, z(t), \nabla u_2) w \nabla u_2 
  \, \dif t,
\end{align}
for $z(t) = t u_1 + (1-t)u_2$. Similarly for the lower order term
\begin{align}\label{eqn:decomp_b}
b(x,u_1) - b(x,u_2) = \int_0^1\frac{\partial b}{\partial \eta}(x,z(t)) w \, \dif t.
\end{align}
Applying \eqref{eqn:decomp_a} and \eqref{eqn:decomp_b} to \eqref{eqn:subsup},
and breaking the integral over the global domain into a sum of integrals 
over each element $T \in \cT$, obtain
\begin{align}\label{eqn:decomp_T}
&\int_\Omega \int_0^1 
  \left( \frac{\partial a}{\partial \xi}(x, u_1, \nabla z(t)) \nabla w\right)\tdot \nabla v
 + \left(\frac{\partial A}{\partial \eta }(x, z(t), \nabla u_2) w\right) \nabla u_2\tdot \nabla v
\nonumber \\
& + \frac{\partial b}{\partial \eta}(x,z(t)) w v \,\dif t \, \dif x
\nonumber \\
& = \sum_{T \in \cT}\int_T \int_0^1 
  \left( \frac{\partial a}{\partial \xi}(x, u_1, \nabla z(t)) \nabla w\right) \tdot \nabla v
 + \left( \frac{\partial A}{\partial \eta }(x, z(t), \nabla u_2)\, w \right) \nabla u_2  \tdot \nabla v
\nonumber \\
& + \frac{\partial b}{\partial \eta}(x,z(t)) w v \, \dif t \, \dif x \le 0,
\end{align}
for all $v \in \cV^+$.
The structure of $a(x,u,\nabla u) = A(x,u,\nabla u)\nabla u$ is exploited in the 
first  term of the above decomposition to yield a quantity that is strictly positive,
and in the second term to create a quantity controlled by the difference in nodal
values of $u_2$. This factorization is a key component of the problem class that 
allows a condition
for uniqueness similar to that in \cite{PoZh17a}, dependent on the variance of
the discrete solution $u$ over each element. 

The proof of the comparison principle follows by considering a particular test
function $v \in \cV^+$, and finding under Assumption \ref{A1:GT} and 
additional assumptions on the discretization, that if $w > 0$ anywhere, 
the left hand side integration over elements of \eqref{eqn:decomp_T} 
is strictly positive, yielding a contradiction and implying $w \le 0$ everywhere, 
hence $u_1 \le u_2$ in $\Omega$.
Common test functions for this purpose in the continuous context include the 
positive part of $w = u_1 - u_2$,
possibly taken to some power, as in \cite{BBGK99,BaMu95}
. In the discrete setting, the positive part of $w$ is generally not a member
of the finite element space, so a discrete version of this function can be used, 
as in \cite{WaZh12}. In this case, as 
in \cite{AnCh96a,PoZh17a}, it is convenient to define a simpler test function 
$v$ as follows.
\begin{definition}\label{def:test}
Let $u_1 \in \cV$ be a subsolution of \eqref{eqn:PDEdiscrete} as in 
\eqref{eqn:subsoln}, and let $u_2 \in \cV$ be a supersolution as in 
\eqref{eqn:supsoln}. Let $w = u_1 - u_2 \in \cV$.   
Define the test function $v \in \cV^+ \subset \cV$ by its nodal values at each 
$a \in \cD$ as
\begin{align}\label{eqn:1Dtest}
v(a) = \left\{ \begin{array}{ll}1, & w(a) > 0, \\
                                0, & w(a) \le 0. \end{array} \right.
\end{align}
\end{definition}
If $w > 0$ anywhere on $\Omega$, then $v(a)$ is nonzero for some $a \in \cD$.
One of the convenient properties of this test function $v$, is that $\nabla v = 0$
over each $T \in \cT$ where $w$ does not change sign. In fact, for the 1D case,
an even simpler test function can be defined for which $v'$ is supported over
no more than two elements.  This strategy was used in \cite{PoZh17a}; however,
in this presentation we will use the same Definition \eqref{def:test} for 
both one and two dimensions to unify the arguments.

Partition the sets $\cT_+, \cT_-$ and $\cT_c$ by the value of $v$ from 
Definition \ref{def:test}, restricted to each element in $\cT$.
\begin{align}\label{eqn:Tcdef}
\cT_+ = \{ T \in \cT ~\big|~ v(x)\big|_T \equiv 1\}, \quad
\cT_- = \{ T \in \cT ~\big|~ v(x)\big|_T \equiv 0\}, \quad
\cT_c = \cT \setminus \{\cT_+ \cup \cT_-\}.
\end{align}
Write the integral over $\Omega$ as 
\[
\int_\Omega = \int_{\bigcup_{T \in \cT_+}} + \int_{\bigcup_{T \in \cT_-}}
+ \int_{\bigcup_{T \in \cT_c}}.
\]
Each integral over $T \in \cT_-$ is trivially zero.  Each integral over 
$\cT \in \cT_+$ satisfies $\nabla v \equiv 0$, and the remaining lower order
part is nonnegative by
\begin{align}\label{eqn:TinTplus}
\int_{T\in \cT_+} \int_0^1\frac{\partial b}{\partial \eta} (x,z(t)) wv \, \dif t \, \dif x = 
\int_{T\in \cT_+} \int_0^1\frac{\partial b}{\partial \eta} (x,z(t)) w \, \dif t \, \dif x \ge 0, 
\end{align}
as $w > 0$, $v = 1$ and $\partial b/\partial \eta \ge 0$, by \eqref{eqn:A1:b_eta} 
of Assumption \ref{A1:GT}.
It remains then to bound the integrals over $T \in \cT_c$ where $w$ changes sign.
In summary, we have from \eqref{eqn:subsup}, \eqref{eqn:decomp_T} and 
\eqref{eqn:TinTplus} that
\begin{align}\label{eqn:red_subsup}
0 & \ge \int_{\Omega} (a(x,u_1,\nabla u_1) - a(x,u_2,\nabla u_2)) \cdot \nabla v 
  + (b(x, u_1)v - b(x,u_2) )v \, \dif x 
\nonumber \\
  & \ge \sum_{T \in \cT_c} \int_T \int_0^1 
  \left( \frac{\partial a}{\partial \xi}(x, u_1, \nabla z(t)) \nabla w \right) \tdot \nabla v
 + \frac{\partial A}{\partial \eta }(x, z(t), \nabla u_2)w \nabla u_2  \tdot \nabla v
\nonumber \\
& + \frac{\partial b}{\partial \eta}(x,z(t)) w v \, \dif t \, \dif x,
\end{align} 
for $v$ given by Definition \ref{def:test}.  We next develop conditions on the 
discretization in one and two dimensions for which the above inequality cannot
hold.

\section{Results for one dimension}
\label{sec:1D}
Let $\Omega = (\alpha,\beta)$, with a subdivision
\begin{align}\label{eqn:subdivision1D}
\alpha = a_0 < a_1 < \ldots < a_{n-1} < a_n = \beta,
\end{align}
where the mesh spacing is not assumed to be uniform.
Define the intervals $\cI_k = (a_{k-1}, a_k)$, $k = 1, \ldots, n$, and 
let $h_k = a_k - a_{k-1}$, the length of each respective interval.
Then $\cT = \cup_{1 \le k \le n}\{\overline \cI_k\}$.
Let $v' = \dif v/\dif x$. 
In one dimension, 
for the mixed problem with Dirichlet conditions at $x=\beta$, with 
Neumann data $\psi(\alpha) \in \R$, 
the weak form \eqref{eqn:PDEweak} reduces to: 
find $u \in \cV \coloneqq \cV_{0,\beta}$ such that
\begin{align}\label{eqn:PDEweak1D_m}
 \int_\Omega a (x,u,u')v' + b(x,u)v \, \dif x= \psi(\alpha)v(\alpha) ~\tforall v \in \cV. 
\end{align}
For the pure Dirichlet problem, \eqref{eqn:PDEweak} reduces to:
find $u \in \cV \coloneqq \cV_0$ such that 
\begin{align}\label{eqn:PDEweak1D_d}
 \int_\Omega a (x,u,u')v' + b(x,u)v \, \dif x = 0, ~\tforall v \in \cV.
\end{align}
Without confusion, the dicrete space $\cV$ refers to either $\cV_{0,\beta}$,
containing the piecewise linear functions that vanish at $x = \beta$ for
problem \eqref{eqn:PDEweak1D_m}; or, $\cV_{0}$, 
containing functions that vanish at $x = \alpha$ and $x = \beta$ for problem 
\eqref{eqn:PDEweak1D_d}.

\begin{theorem}[One dimensional comparison theorem]
\label{thm:1Dcompare}
Let $u_1$ be a subsolution as in \eqref{eqn:subsoln} of either the mixed 
problem \eqref{eqn:PDEweak1D_m} or
the Dirichlet problem \eqref{eqn:PDEweak1D_d}; 
and, let $u_2$ be a supersolution as in \eqref{eqn:supsoln}, of the same problem.  
Assume the conditions of Assumption \ref{A1:GT}, and
\begin{align}\label{eqn:1Dcond}
\max_{1 \le k \le n} 
\left\{ |u_2(a_k) - u_2(a_{k-1})| + \left(\frac{B_\eta}{K_\eta}\right)h_k^2 \right\} 
< \frac{2 \gamma_a}{K_\eta}.
\end{align}
Then, it holds that $u_1 \le u_2$ in $\Omega$.
\end{theorem}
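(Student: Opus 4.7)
The plan is to argue by contradiction: I would suppose $w \coloneqq u_1 - u_2 > 0$ at some point of $\Omega$, then use the test function $v$ of Definition \ref{def:test} to derive a strict contradiction with \eqref{eqn:red_subsup}. Since $w = 0$ on $\Gamma_D$ and is positive somewhere, the collection $\cT_c$ of transition intervals must be nonempty, and must contain at least one interval $\cI_k$ with $|w(a_k) - w(a_{k-1})| > 0$. The task then reduces to showing that, under \eqref{eqn:1Dcond}, the integrand in \eqref{eqn:red_subsup} on each such interval contributes a strictly positive amount.

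On any $\cI_k \in \cT_c$, both $v'$ and $w'$ are constants sharing a common sign (by the construction of $v$ and the definition of $\cT_c$), with $v'w' = |w(a_k) - w(a_{k-1})|/h_k^{2}$. Ellipticity \eqref{eqn:A1:elliptic}, applied with $\zeta = 1$, then yields the lower bound $\gamma_a\,|w(a_k) - w(a_{k-1})|/h_k$ for the first term of the integrand. For the second term I would apply \eqref{eqn:A1:a_eta}, pull out the constants $|v'|$ and $|u_2'| = |u_2(a_k) - u_2(a_{k-1})|/h_k$, and reduce the estimate to controlling $\int_{\cI_k}|w|\,\dif x$; the third term reduces to the same integral via \eqref{eqn:A1:b_eta} and $|v| \le 1$. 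The key geometric observation is that since $w$ is affine on $\cI_k$ and vanishes at some point of $\overline{\cI_k}$, the sharp triangle estimate
\[
\int_{\cI_k}|w|\,\dif x \le \tfrac{h_k}{2}\,|w(a_k) - w(a_{k-1})|
\]
holds, and this is precisely the source of the factor of $2$ in \eqref{eqn:1Dcond}.

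Assembling the three estimates and factoring out the common quantity $|w(a_k)-w(a_{k-1})|/h_k$, the integrand on $\cI_k \in \cT_c$ is bounded from below by
\[
\Bigl(\gamma_a - \tfrac{K_\eta}{2}|u_2(a_k)-u_2(a_{k-1})| - \tfrac{B_\eta}{2}h_k^{2}\Bigr)\,\frac{|w(a_k) - w(a_{k-1})|}{h_k},
\]
which is strictly positive precisely when \eqref{eqn:1Dcond} holds on that interval. Summing over $\cT_c$ then forces the right-hand side of \eqref{eqn:red_subsup} to be strictly positive, contradicting its nonpositivity and therefore forcing $w \le 0$ throughout $\Omega$. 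I expect the main technical point to be extracting the sharp $\tfrac{1}{2}$ in the estimate on $\int_{\cI_k}|w|$: a naive pointwise bound $|w| \le |w(a_k)-w(a_{k-1})|$ would suffice to prove a comparison theorem but only with the weaker mesh condition $\gamma_a/K_\eta$ in place of the stated $2\gamma_a/K_\eta$, so the geometry of $w$ vanishing on $\overline{\cI_k}$ must be used to recover the optimal constant.
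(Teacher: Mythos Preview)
Your proposal is correct and follows essentially the same route as the paper's proof: contradiction via the test function of Definition~\ref{def:test}, the identity $w'v' = |w(a_k)-w(a_{k-1})|/h_k^{2}$ on each $\cI_k\in\cT_c$, ellipticity for the first term, and the sharp bound $\int_{\cI_k}|w|\,\dif x \le \tfrac{h_k}{2}|w(a_k)-w(a_{k-1})|$ (the paper phrases this as ``precisely one of $w(a_k),w(a_{k-1})$ is positive'') for the remaining two terms. The assembled lower bound and the resulting contradiction match the paper line for line.
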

If the lower order term $b$ is independent of $u$, then $B_\eta = 0$, and 
the condition \eqref{eqn:1Dcond} is similar to that in \cite{PoZh17a}, for a more
general diffusion coefficient.  If, on the other hand, $B_\eta > 0$, global mesh 
condition is introduced, as $h_k < \sqrt{2 \gamma_a/ B_\eta}$ for all 
$k = 1, \ldots, n$, is a necessary
condition for satisfaction of \eqref{eqn:1Dcond}. 

The proof of Theorem \ref{thm:1Dcompare} follows by using the test function $v$ from 
Definition \ref{def:test}
to show the right hand side of \eqref{eqn:red_subsup} is strictly positive.
\begin{proof}
Assume $w = u_1 - u_2$, is positive somewhere in $\Omega$.  
Then $\cT_c$ is nonempty, and in one dimension, 
inequality \eqref{eqn:red_subsup} reduces to
\begin{align}\label{eqn:1dc_001}
0 \ge \sum_{\cI_{k} \in \cT_c} \int_{\cI_{k}}\int_0^1 \frac{\partial a}{\partial \xi}(x,u_1,z'(t))w'v' +
\frac{\partial A}{\partial \eta}(x,z(t), u')u'wv' + 
\frac{\partial b}{\partial \eta}(x,z(t))wv \, \dif t \, \dif x.
\end{align}
Proceed by bounding each term on the right hand side of \eqref{eqn:1dc_001}.
On each interval $\cI_{k} \in \cT_c$, $w$ changes sign, and by Definition~\ref{def:test}
the functions $w'$ and $v'$ are constants with the same sign. 
Then, the product $w'v' = |w(a_k) - w(a_{k-1})|/h_k^2$ on $\cI_{k}$, and it holds that
\begin{align}\label{eqn:1dc_002}
\int_{\cI_k} \int_0^1 \frac{\partial a}{\partial \xi}(x,u_1,z) w'v' \, \dif t\, \dif x 
& = \frac{|w(a_k) - w(a_{k-1})|}{h_k^2}\int_{\cI_k} 
\int_0^1 \frac{\partial a}{\partial \xi}(x,u_1,z) \, \dif t \, \dif x 
\nonumber \\
&\ge \frac{|w(a_k) - w(a_{k-1})|}{h_k} \gamma_a, 
\end{align}
where $\gamma_a$ is the constant from \eqref{eqn:A1:elliptic}.
For the second term of  \eqref{eqn:1dc_001}, it is useful to note that 
$\int_{\cI_k} |w| \le |w(a_k) - w(a_{k-1})|h_k/2$, as precisely one of $w(a_k)$ and 
$w(a_{k-1})$ must be positive.  Then
\begin{align}\label{eqn:1dc_003}
\int_{\cI_k} \int_0^1 \frac{\partial A}{\partial \eta}(x,z(t), u')u'wv' \, \dif t \, \dif x
& \ge \frac{-K_\eta |u_2(a_k) - u_2(a_{k-1}) |}{h_k^2}\int_{\cI_k}|w| \, \dif x
\nonumber \\ 
& \ge -\left(\frac{|w(a_k) - w(a_{k-1})|}{h_k}\right) 
             \frac{K_\eta |u_2(a_k) - u_2(a_{k-1})|}{2},
\end{align}  
where $K_\eta$ is the constant from \eqref{eqn:A1:a_eta}.
Each integral over last term of \eqref{eqn:1dc_001} satisfies
\begin{align}\label{eqn:1dc_004}
\int_{\cI_k} \int_0^1 \frac{\partial b}{\partial \eta}(x,z(t))wv \, \dif t \, \dif x
 \ge -B_\eta \int_{\cI_k} |w|\ \dif x 
 \ge -|w(a_k) - w(a_{k-1})| \frac{B_\eta h_k}{2}, 
\end{align}
where $B_\eta$ is the constant from \eqref{eqn:A1:b_eta}.
Putting \eqref{eqn:1dc_002}, \eqref{eqn:1dc_003} and \eqref{eqn:1dc_004} 
together into \eqref{eqn:1dc_001} yields
\begin{align*}
0 \ge \sum_{\cI_k \in \cT_c} \frac{|w(a_k) - w(a_{k-1})|}{h_k}
\left(\gamma_a - |u_2(a_k) - u_2(a_{k-1})|\frac{K_\eta}{2} 
               - h_k^2 \frac{B_\eta}{2}  \right) > 0,
\end{align*}
where the strict positivity in the last inequality holds under the condition 
\eqref{eqn:1Dcond}. This contradiction
establishes that $w = u_1 - u_2$ cannot be positive anywhere on $\Omega$. 
\end{proof}

As any solution $u$ to \eqref{eqn:PDEweak1D_m} or \eqref{eqn:PDEweak1D_d} is both
a subsolution and a supersolution, the uniqueness of solutions
follows, under the assumption
\begin{align}\label{eqn:1dcond_un}
\max_{1 \le k \le n} 
\left\{ |u(a_k) - u(a_{k-1})| + \left(\frac{B_\eta}{K_\eta}\right)h_k^2 \right\} 
< \frac{2 \gamma_a}{K_\eta}.
\end{align}
The constants $\gamma_a, B_\eta$ and $K_\eta$ are based purely on the problem data,
and if they are known or can be approximated for a given problem, then 
\eqref{eqn:1dcond_un} can be easily and efficiently checked, and used to determine
uniqueness of a given computed solution. It is important in particular for 
adaptive algorithms to have such a condition which assures the uniqueness of the
discrete solution without unavailable {\em a priori} knowledge. As demonstrated
by the counterexamples of \cite{AnCh96a} ({\em cf.} \cite{PoZh17a}), some conditions
on the discretization are indeed necessary to assure the uniqueness of the solution.
\section{Results for two dimensions}
\label{sec:2D}
We next establish the uniqueness of the piecewise linear finite element solution
to \eqref{eqn:PDEdiscrete}
in two dimensions, under Assumption \eqref{A1:GT}.
The simplicial mesh is assumed to be uniformly acute, and the smallest angle to be 
bounded away from zero.
\begin{assumption}[Mesh regularity]
\label{A2:meshregularity}
There are numbers $0 < t_{min} \le t_{max} < \pi/2$, 
for which the interior angles $\theta_i, ~i = 1,2,3$, 
of each $T \in \cT$ satisfy
\begin{align}\label{eqn:meshregularity}
t_{min} \le \theta_i \le t_{max}, ~ i = 1, 2, 3.
\end{align}
Define the quantities
\begin{align}\label{eqn:minmax} 
s_{min} = \sin(t_{min}), ~\an~ c_{min} = \cos(t_{max}).
\end{align}
\end{assumption}
The acuteness condition which states that angles are bounded below $\pi/2$, 
agrees with that in \cite{PoZh17a} 
for the simpler case of $a(x,\eta,\xi) = A(x,\eta)\xi$.  
In the following analysis, the condition that the angles are bounded away from zero 
is used to control the maximum ratio of edge-lengths in any triangle.

The relation between the measure of each element $T$, and the lengths of the 
sides are given by standard trigonometric descriptions.
For each $T \in \cT$, let $|T|$ denote the two-dimensional measure, or area.
For any two distict edges $e_i$ and $e_j$, the area
$|T| = |e_i||e_j| \sin \theta_k/2$, for $\theta_k$ the interior angle between edges
$e_i$ and $e_j$. This provides the useful formula
$|e_i||e_j|/|T| = 2/\sin \theta_k$.
Under Assumption \ref{A2:meshregularity}, the ratio of the sines of any pair of angles
in a triangle $T$ is bounded away from zero. Define the local constants
\begin{align}
\label{eqn:cTdef}
c_T & \coloneqq \min_{i,j = 1,2,3} \cos \theta_i,
\\ \label{eqn:sTdef}
s_T &\coloneqq \max_{i, j = 1,2,3} \sin \theta_{i},
\\ \label{eqn:regdef}
r_T &\coloneqq \min_{i, j = 1,2,3} \frac{\sin \theta_{i} }{\sin \theta_{j}},
\text{ for } \theta_i, ~ i = 1,2,3, ~\text{ the angles of } T.
\end{align}
The constant $r_T$ is used to relate the lengths of edges 
of triangle $T$ by
\begin{align}\label{eqn:edge_ineq}
r_T |e_i| \le |e_j| \le r_T^{-1}|e_i|, ~ i,j = 1,2,3.
\end{align}

Each vertex corresponding to a mesh degree of freedom, $a \in \cD$,
has coordinates $a = (x_1,x_2) \in \overline \Omega \setminus \Gamma_D$.  
From \S \ref{subsec:discretization}, 
$\cV \coloneqq \cV_{0,D}$ is the piecewise linear Lagrange finite element
space subordinate to partition $\cT$, that vanishes on $\Gamma_D$
in the sense of the trace. 

\subsection{Relations between gradients of basis functions}
\label{subsec:2Dsetup}
To clarify the techical lemmas that follow,
some standard notations and properties of piecewise linear finite elements
in two dimensions are now reviewed.
The following relations involving gradients of basis functions
are used often in the analysis.

Let $\{a_1, a_2, a_3 \}$ be a local counterclockwise numbering of the vertices 
of a simplex $T \in \cT$.  Let the corresponding edges $\{e_1, e_2, e_3 \}$, follow
a consistent local numbering, with edge $e_i$ opposite vertex 
$a_i, ~ i = 1, 2, 3$.
Let $\varphi_i$ be the basis function on element $T \in \cT$ defined by its nodal
values at the vertices of $T$.
\begin{align*}
\varphi_i(a_j) 
= \left\{ \begin{array}{ll}
  1, & i = j, \\
  0, & i \ne j.
  \end{array}\right., ~i,j = 1, 2, 3.
\end{align*}

The inner product between gradients of basis functions and their respective 
integrals over elements $T \in \cT$, may be
computed by change of variables to a reference element $\widehat T$, in reference 
domain variables $(\widehat x_1, \widehat x_2)$.
Specifically, the coordinates of $\widehat T$ are given as 
$\widehat a_1 = (0,0)^T$,
$\widehat a_2= (1, 0)^T$,
$\widehat a_3= (0,1)^T$.
The Jacobian of the transformation between reference coordinates
$\widehat x = (\widehat x_1, \widehat x_2)^T$, and physical coordinates 
$x = (x_1,x_2)^T$ , is given by
$J \widehat x = (x - a_1)$, with
$J = \left( \begin{array}{rrr}
    a_2-a_1 & a_3 - a_1
    \end{array}\right),$
for which $\det J = 2|T|$, with $|T|$ the area of triangle $T$.
The reference element $\widehat T$ is equipped 
with the nodal basis functions $\widehat \varphi_i, ~i = 1, 2, 3$, where
$ \widehat \varphi_1 = 1 - \widehat x_1 - \widehat x_2$,
$\widehat \varphi_2 = \widehat x_1$, 
$\widehat \varphi_3 = \widehat x_2$.
The gradients $\widehat \nabla$ are taken with respect to the reference domain variables
$\widehat x_1$ and $\widehat x_2$, and 
the transformation of gradients between the physical and reference
domains is given by $\nabla \varphi_i = J^{-T} \widehat \nabla \widehat \varphi_i.$ 
The gradients of basis functions satisfy the identity
\begin{align}\label{eqn:grad_id}
\nabla \varphi_i + \nabla \varphi_j = -\nabla \varphi_k,
\end{align} 
for any distinct assignment of $i,j$ and $k$ to the integers $\{1,2,3\}$.
This allows
the representation of $\nabla \varphi_i^T \nabla \varphi_i$ in terms of edge-length $|e_i|$.
The maximum interior angle $t_{\max}<\pi/2$ from Assumption \ref{A2:meshregularity}
then assures 
$\nabla \varphi_i^T \nabla \varphi_j <0$, for any $i \ne j$.
The inner products between gradients in each element $T$  satisfy the following identities:
\begin{align} \label{eqn:gradTgrad}
\nabla \varphi_i^T \nabla \varphi_i =   \frac 1 {4|T|^2} |e_i|^2, ~\an~
\nabla \varphi_i^T \nabla \varphi_j = \frac {-1} {4|T|^2} |e_i||e_j|\cos \theta_k, 
~i \ne j. 
\end{align}

\subsection{Additional assumptions for the 2D problem}\label{subsec:2Dassume}
We next establish estimates which demonstrate for any $T \in \cT_c$, given by
\eqref{eqn:Tcdef}, that
\begin{align}\label{eqn:2Dexpansion}
&\int_T \int_0^1 
  \left( \frac{\partial a}{\partial \xi}(x, u_1, \nabla z(t)) \nabla w\right) \tdot \nabla v
 + \left(\frac{\partial A}{\partial \eta }(x, z(t), \nabla u_2)\, w\right) \nabla u_2  \tdot \nabla v
\nonumber \\
& + \frac{\partial b}{\partial \eta}(x,z(t)) w v \, \dif t \, \dif x > 0,
\end{align}
with $v$ the test function given by Definition \ref{def:test}.
In light of \eqref{eqn:red_subsup}, 
this establishes by contradiction that $w = u_1 - u_2$ is nowhere positive.
To bound the leading term of \eqref{eqn:2Dexpansion} away from zero, 
some additional restrictions on 
the nonlinear diffusion coefficient $A$ are now considered.  
\begin{assumption}\label{A2:A}
Assume $A(x,\eta,\xi)$ is of the form
\begin{align}\label{eqn:A2:decomp}
A(x,\eta,\xi) = A_0(x,\eta) + A_1(x,\eta) f(|\xi|) + A_2(x) g(|\xi|).
\end{align}
Assume there is a positive constant $\lambda_0$, and there are nonnegative 
$\Lambda_1$ and $\Lambda_2$, with 
\begin{align}\label{eqn:A2:pos}
A_0(x,\eta) \ge \lambda_0,
\quad 0 \le A_1(x,\eta) \le \Lambda_1,
~\an~ 0 \le A_2(x) \le \Lambda_2,
\end{align}
for $\ale x \in \Omega$, and all $\eta \in \R$, and $\xi \in \R^2$. 

Assume $f(s), g(s) \ge 0$, and $f$ satisfies the following growth condition.
There is a constant $C_f$ with 
\begin{align}\label{eqn:fbound}
s|f'(s)| \le C_f, ~\tforall s \ge 0.
\end{align}
Assume $g$ satisfies one of the two following conditions.
\begin{align}\label{eqn:gbounded}
s|g'(s)| &\le C_g, ~\tforall s \ge 0, 
\\ \label{eqn:gunbdd}
s|g'(s)| &\le \widehat C_g g(s), ~\tforall s \ge 0, 
~\with~ 0 \le \widehat C_g \le c_{min}.
\end{align}
\end{assumption}
The function $g$ is not assumed to be either
bounded or bounded away from zero, while the boundedness of $f$ is required from 
\eqref{eqn:A1:a_eta} of Assumption \ref{A1:GT}. 
Functions $f$ and $g$ that satisfy Assumption \ref{A2:A} are not 
uncommon.  Some examples are given in the next remark.
\begin{remark}
Admissible functions $\phi(|\xi|)$ 
that satisfy \eqref{eqn:fbound}, \eqref{eqn:gbounded} include the following.
\begin{align*}
\phi(|\xi|) = (\kappa + |\xi|^2)^{-\alpha}, ~\text{ for }~ \kappa > 0 ~\an~ \alpha \ge 0,
\end{align*} 
which appears for instance as the diffusion coefficient in the
equation for capillarity (see \cite[Chapter 10]{GilbargTrudinger}, ) as well as 
the equations of prescribed mean curvature (see \cite{Verfurth94}), with
$\kappa = 1$ and $\alpha = 1/2$. 
\[
\phi(|\xi|) = 2\left(K_0 + \sqrt{K_0^2 + 4|\xi|} \right)^{-1}, \quad K_0 > 0,
\]
which is numerically investigated as a specific explicit case of the more general 
implicitly defined coefficient used in the modeling of glacier ice, as
analyzed in \cite{GlRa03}.
\[
\phi(|\xi|) = \arctan(|\xi|), ~\an~ \phi(|\xi|) = \tanh(|\xi|).
\]
Unbounded functions that satisfy \eqref{eqn:gbounded} include
\[
\phi(|\xi|) = \log( \kappa + |\xi|^2), ~ \kappa > 1,
\]
which allows for $g(|\xi|)$ hence $A(x,\eta,\xi)$ to be unbounded, 
albeit with slow growth.

Functions satisfying \eqref{eqn:gunbdd} include those of $p$-Laplacian type,
for $p$ close to $2$.
\[
\phi(|\xi|) = |\xi|^{p-2}, ~\text{ for } |p-2| < c_{min}.
\]
\end{remark}

\subsection{Technical lemmas in two dimensions}\label{subsec:2Dlemmas}
An important quantity in the analysis is the maximum
variance of a function over a given element.  For piecewise linear functions,
this is simply the maximum difference between any two vertex values on a given triangle.
\begin{definition}\label{def:delta}
For a function $\phi \in \cV$, and element $T \in \cT$, 
define $\delta_T(\phi)$ as follows. 
\begin{align}\label{eqn:def:delta}
\delta_T(\phi) = \max_{i,j = \{1,2, 3\}}|\phi(a_i) - \phi(a_j)|,
\end{align}
where $\{a_1, a_2, a_3\}$, are the vertices of $T$.
\end{definition}
In the technical lemmas which bound each term in the expansion
\eqref{eqn:2Dexpansion}, the following identity is used repeatedly.  
\begin{align}\label{eqn:gradid_32}
\nabla \phi &= \phi(a_i) \nabla \varphi_i + \phi(a_j) \nabla \varphi_j 
                                        + \phi(a_k) \nabla \varphi_k 
\nonumber \\
& = \phi(a_i) \nabla \varphi_i + \phi(a_j) \nabla(\varphi_j + \varphi_k ) 
                           + (\phi(a_k) - \phi(a_j)) \nabla \varphi_k
\nonumber \\
& = (\phi(a_i) - \phi(a_j)) \nabla \varphi_i + (\phi(a_k) - \phi(a_j)) \nabla \varphi_k.
\end{align}
The first Lemma characterizes the strict positivity of the first term of
\eqref{eqn:2Dexpansion}
\begin{lemma}\label{lemma:a_xi}
Let Assumptions \ref{A1:GT}, \ref{A2:meshregularity} and \ref{A2:A} hold, with $g$
satisfying \eqref{eqn:gbounded}.
Let $w, u \in \cV$, and $z(t) \in \cV, ~ 0 \le t \le 1$. 
Let $a_i$, $a_j$ and $a_k$ be the three vertices
of $T \in \cT_{c}$, ordered so that $w(a_i) \ge w(a_j) \ge w(a_k)$ with $w(a_{i}) >0$ 
and  $w(a_k) \le 0$. 
Let $v$ be given by Definition \ref{def:test}.
Assume there is a constant $p_T > 0$, for which the constants 
$\lambda_0, \Lambda_1$ and $\Lambda_2$ of \eqref{eqn:A2:pos}, and
$C_f$ and $C_g$ of \eqref{eqn:fbound} and \eqref{eqn:gbounded}, satisfy the relation
\begin{align}\label{eqn:pT}
p_T \coloneqq \lambda_0 \cos\theta_j - \Lambda_1 C_f - \Lambda_2 C_g > 0.
\end{align}
\begin{enumerate}
\item  If $w(a_j) \le 0$, namely $w$ is positive only at the vertex $a_{i}$, it holds that
\begin{align}\label{eqn:lem:e1_res1}
&\int_T \int_0^1 \nabla w^T \left( \frac{\partial a}{\partial \xi}
(x,u, \nabla z(t))\right)^T \nabla v \, \dif t \, \dif x 
\nonumber \\
&\ge \frac{1}{2 \sin \theta_j} 
\left\{ (w(a_i) - w(a_j))\frac{\gamma_a}{r_T} + (w(a_j) - w(a_k))p_T\right\}.
\end{align}
  \item If $w(a_j) \ge 0$, namely $w$ is positive at both 
$a_{i}$ and $a_{j}$,  it holds that 
\begin{align}\label{eqn:lem:e1:res2}
&\int_T \int_0^1 \nabla w^T \left( \frac{\partial a}{\partial \xi}
(x,u, \nabla z(t))\right)^T \nabla v \, \dif t \, \dif x 
\nonumber \\
&\ge \frac{1}{2 \sin \theta_j} 
\left\{ (w(a_i) - w(a_j))p_T + (w(a_j) - w(a_k))\frac{\gamma_a}{r_T}\right\}.
\end{align}
\end{enumerate}
\end{lemma}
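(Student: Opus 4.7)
The plan is to substitute the test function $v$ from Definition~\ref{def:test} into the integrand, and reduce everything to bounding the pointwise symmetric bilinear form $B(\zeta_{1},\zeta_{2}):=\zeta_{1}^{T}(\partial a/\partial\xi)\zeta_{2}$ applied to pairs of basis-function gradients. In Case~(1), since $v(a_{i})=1$ and $v(a_{j})=v(a_{k})=0$, we have $v|_{T}=\varphi_{i}$ and $\nabla v=\nabla\varphi_{i}$; in Case~(2), $v|_{T}=\varphi_{i}+\varphi_{j}$ and the identity \eqref{eqn:grad_id} gives $\nabla v=-\nabla\varphi_{k}$. In either case I would rewrite $\nabla w=(w(a_{i})-w(a_{j}))\nabla\varphi_{i}+(w(a_{k})-w(a_{j}))\nabla\varphi_{k}$ via \eqref{eqn:gradid_32}, so the integrand splits into a ``diagonal'' piece (in $|\nabla\varphi_{i}|^{2}$ for Case~(1), $|\nabla\varphi_{k}|^{2}$ for Case~(2)) and an off-diagonal ``cross'' piece in $\nabla\varphi_{i}\tdot\nabla\varphi_{k}$.

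Next, using Assumption~\ref{A2:A} I would expand $\partial a/\partial\xi=A(x,\eta,\xi)\,I+h\,\xi\otimes\xi$, where $h=(A_{1}f'(|\xi|)+A_{2}g'(|\xi|))/|\xi|$, so that $B(\zeta_{1},\zeta_{2})=A\,\zeta_{1}\tdot\zeta_{2}+h(\xi\tdot\zeta_{1})(\xi\tdot\zeta_{2})$. The diagonal piece $B(\nabla\varphi_{i},\nabla\varphi_{i})\ge\gamma_{a}|\nabla\varphi_{i}|^{2}$ is handled directly by ellipticity \eqref{eqn:A1:elliptic}, producing the $\gamma_{a}$ factor. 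For the $A$-part of the cross piece I would use $A\ge\lambda_{0}$ from \eqref{eqn:A2:pos} together with the mesh acuteness of Assumption~\ref{A2:meshregularity}, which yields $\nabla\varphi_{i}\tdot\nabla\varphi_{k}<0$ and, via \eqref{eqn:gradTgrad}, the factorization $|\nabla\varphi_{i}\tdot\nabla\varphi_{k}|=|\nabla\varphi_{i}||\nabla\varphi_{k}|\cos\theta_{j}$; multiplied by the nonpositive scalar $(w(a_{k})-w(a_{j}))$, this contributes the nonnegative term $\lambda_{0}\cos\theta_{j}\,(w(a_{j})-w(a_{k}))|\nabla\varphi_{i}||\nabla\varphi_{k}|$.

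The main obstacle is the $h$-part of the cross piece, which has no definite sign. The crucial estimate, and the very reason for the growth hypotheses \eqref{eqn:fbound}--\eqref{eqn:gbounded}, is the uniform bound $|h||\xi|^{2}=|\xi||A_{1}f'(|\xi|)+A_{2}g'(|\xi|)|\le\Lambda_{1}C_{f}+\Lambda_{2}C_{g}$. Paired with the Cauchy--Schwarz inequality $|(\xi\tdot\nabla\varphi_{i})(\xi\tdot\nabla\varphi_{k})|\le|\xi|^{2}|\nabla\varphi_{i}||\nabla\varphi_{k}|$, this gives the $h$-cross term the lower bound $-(\Lambda_{1}C_{f}+\Lambda_{2}C_{g})(w(a_{j})-w(a_{k}))|\nabla\varphi_{i}||\nabla\varphi_{k}|$. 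Adding to the $A$-cross contribution, the two negative signs on $(\Lambda_{1}C_{f}+\Lambda_{2}C_{g})$ and the positive $\lambda_{0}\cos\theta_{j}$ combine to yield exactly the factor $p_{T}=\lambda_{0}\cos\theta_{j}-\Lambda_{1}C_{f}-\Lambda_{2}C_{g}$ from \eqref{eqn:pT}, and the positivity assumption $p_{T}>0$ is what keeps the combined cross contribution nonnegative.

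To finish, I would integrate over $T\times[0,1]$, using that the pointwise bounds do not depend on $t$ and that the basis gradients are constant on $T$. Converting to edge-length and angle form via \eqref{eqn:gradTgrad} together with $|T|=\tfrac{1}{2}|e_{j}||e_{k}|\sin\theta_{i}$, one has $|T||\nabla\varphi_{i}||\nabla\varphi_{k}|=|e_{i}||e_{k}|/(4|T|)=1/(2\sin\theta_{j})$ and $|T||\nabla\varphi_{i}|^{2}=|e_{i}|^{2}/(4|T|)$, and the resulting ratio of edge lengths is controlled by $r_{T}$ via \eqref{eqn:regdef} and \eqref{eqn:edge_ineq}. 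Factoring out $1/(2\sin\theta_{j})$ then yields the estimate \eqref{eqn:lem:e1_res1} in Case~(1). Case~(2) follows by the symmetric argument with $\nabla v=-\nabla\varphi_{k}$, where the roles of $\nabla\varphi_{i}$ and $\nabla\varphi_{k}$ interchange; this is exactly why the coefficients $\gamma_{a}/r_{T}$ and $p_{T}$ are exchanged between \eqref{eqn:lem:e1_res1} and \eqref{eqn:lem:e1:res2}.
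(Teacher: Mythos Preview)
Your proposal is correct and follows essentially the same route as the paper's proof: the paper likewise identifies $\nabla v=\nabla\varphi_i$ (resp.\ $-\nabla\varphi_k$), expands $\nabla w$ via \eqref{eqn:gradid_32}, handles the diagonal term by the ellipticity bound \eqref{eqn:A1:elliptic}, and decomposes the cross term $\nabla\varphi_i^{T}(\partial a/\partial\xi)\nabla\varphi_k$ into the $A\,\nabla\varphi_i^{T}\nabla\varphi_k$ piece (bounded via $A\ge\lambda_0$ and \eqref{eqn:gradTgrad}) and the rank-one $\xi\otimes\xi$ piece, which is controlled exactly as you describe using \eqref{eqn:fbound}--\eqref{eqn:gbounded} and Cauchy--Schwarz to produce the constant $p_T$. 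Your explicit writing of $\partial a/\partial\xi=A\,I+h\,\xi\otimes\xi$ is just a cleaner notational packaging of the paper's computation \eqref{eqn:e1_004}--\eqref{eqn:e1_007}.
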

\begin{proof}
First, expand $\nabla w$ as a linear combination of basis functions as in
\eqref{eqn:gradid_32}. For any $\nabla z \in \R^2$, abbreviating
$\partial a(x,u, \nabla z)/\partial \xi$ as $(\partial a / \partial \xi)$, and noting the 
structure of $a$ implies the symmetry of $\partial a/ \partial \xi$, we have
\begin{align}\label{eqn:e1_001}
\nabla w^T \left(\frac{\partial a}{\partial \xi}\right)^T \nabla v
& = (w(a_i) - w(a_j)) \nabla \varphi_i^T \left( \frac{\partial a}{\partial \xi} \right)  \nabla v
\nonumber \\
&  + (w(a_k) - w(a_j)) \nabla \varphi_k^T \left( \frac{\partial a}{\partial \xi} \right)  \nabla v.
\end{align}
In the case that $w$ has one positive vertex, $\nabla v = \nabla \varphi_i$, and 
in the case that $w$ has two positive vertices, $\nabla v = -\nabla \varphi_k$.
In the first case, the ellipticity condition \eqref{eqn:A1:elliptic} 
implies 
\begin{align}\label{eqn:e1_002}
(w(a_i) - w(a_j)) \nabla \varphi_i^T \left( \frac{\partial a}{\partial \xi} \right)  \nabla v
&\ge 
(w(a_i) - w(a_j)) \gamma_a \nabla \varphi_i^T \nabla \varphi_i,
\nonumber \\
&\ge (w(a_i) - w(a_j)) \frac{\gamma_a}{r_T} \frac{|e_i||e_k|}{4|T|^2}
\nonumber \\
& = (w(a_i) - w(a_j)) \frac{\gamma_a}{r_T} \frac{1}{2|T|\sin \theta_j},
\end{align}
where $r_T$ defined in \eqref{eqn:regdef} is used to relate the lengths of edges
$e_i$ and $e_k$.
In the second case, the same condition implies
\begin{align}\label{eqn:e1_003}
(w(a_k) - w(a_j)) \nabla \varphi_k^T \left( \frac{\partial a}{\partial \xi} \right)  \nabla v
&\ge (w(a_j) - w(a_k)) \gamma_a \nabla \varphi_k^T \nabla \varphi_k
\nonumber \\
& \ge (w(a_j) - w(a_k)) \frac{\gamma_a}{r_T} \frac{1}{2|T|\sin \theta_j}.
\end{align}
The above estimate for each case yields a strictly positive contribution. 
For the remaining term of \eqref{eqn:e1_001}, apply
the decomposition of Assumption \ref{A2:A}. 
\begin{align}\label{eqn:e1_004}
\nabla \varphi_i^T \left(\frac{\partial a}{\partial \xi} \right) \nabla \varphi_k
& = 
\nabla \varphi_i^T \nabla z \left( \frac{\partial A}{\partial \xi} \right) \nabla \varphi_k
+ A(x,u,\nabla z) \nabla \varphi_i^T \nabla \varphi_k
\nonumber \\
& = (\nabla \varphi_i^T \nabla z)
\left\{
  A_1(x,u) \frac{\partial f}{\partial \xi}(|\nabla z|) 
+ A_2(x) \frac{\partial g}{\partial \xi}(|\nabla z|) \right\}
  \nabla \varphi_k
\nonumber \\
&+ A(x,u,\nabla z) \nabla \varphi_i^T \nabla \varphi_k.
\end{align} 
The Jacobian of $f(|\xi|)$ (respectively, $g(|\xi|)$) has the form
\begin{align*}
\frac{\partial f}{\partial \xi}(|\xi|) = f'(|\xi|)|\xi|^{-1} \xi^T.
\end{align*}
The first term on the
right hand side of \eqref{eqn:e1_004} then satisfies
\begin{align}\label{eqn:e1_005}
 (\nabla \varphi_i^T \nabla z)A_1(x,u) \frac{\partial f}{\partial \xi}(|\nabla z|)\nabla \varphi_k
& = A_1(x,u)
    (\nabla \varphi_i^T \nabla z) f'(|\nabla z|) |\nabla z|^{-1} \nabla z^T \nabla \varphi_k
\nonumber \\
& \le \Lambda_1 (\nabla \varphi_i^T \nabla z) |f'(|\nabla z|)||\nabla \varphi_k|
\nonumber \\
& \le \Lambda_1 |\nabla \varphi_i|  |\nabla \varphi_k| |f'(|\nabla z|)||\nabla z|
\nonumber \\
& \le \Lambda_1 C_f|\nabla \varphi_i| |\nabla \varphi_k|,
\end{align}
where the last inequality follows from \eqref{eqn:fbound}.
Similarly for the second term on the right hand side of \eqref{eqn:e1_004}, it holds 
\begin{align}\label{eqn:e1_005g}
 (\nabla \varphi_i^T \nabla z)A_2(x) \frac{\partial g}{\partial \xi}(|\nabla z|)\nabla \varphi_k
& \le \Lambda_2 C_g|\nabla \varphi_i| |\nabla \varphi_k|.
\end{align}
With the Assumption~\ref{A2:A}, it is clear that $A(x, u, \nabla z) \ge \lambda_{0}$. Therefore the third term on the right hand side of \eqref{eqn:e1_004} satisfies
\begin{align}\label{eqn:e1_005a}
A(x,u,\nabla z) \nabla \varphi_i^T \nabla \varphi_k
& = -A(x, u, \nabla z)\frac{|e_i||e_k| \cos \theta_j}{4|T|^2} 
\le -\lambda_0 \frac{|e_i||e_k| \cos \theta_j}{4|T|^2}.
\end{align}
Applying  \eqref{eqn:e1_005}, \eqref{eqn:e1_005g} and \eqref{eqn:e1_005a} to 
\eqref{eqn:e1_004}, we obtain
\begin{align}\label{eqn:e1_007}
-\nabla \varphi_i^T \left(\frac{\partial a}{\partial \xi} \right) \nabla \varphi_k
& \ge \frac{|e_i||e_k|}{4|T|^2}(\lambda_0\cos\theta_j - \Lambda_1 C_f - \Lambda_2 C_g)
= \frac{p_T}{2 |T|\sin \theta_j},
\end{align}
where the sign on the left-hand side agrees with $(w(a_k) - w(a_j))$ in the case
of one positive vertex where $\nabla v = \nabla \varphi_i$; and 
$-(w(a_i) - w(a_j))$, in the case of two positive 
vertices, where $\nabla v = -\nabla \varphi_k.$
For the case of one positive vertex,
putting \eqref{eqn:e1_001} together with \eqref{eqn:e1_002} 
and \eqref{eqn:e1_007} and integrating, yields
\begin{align}\label{eqn:e1_008}
\int_T \int_0^1 \nabla w^T \left(\frac{\partial a}{\partial \xi}\right) \nabla v \, \dif t \, \dif x
& \ge \frac{1}{2 \sin \theta_j} 
\left\{(w(a_i) - w(a_j)) \frac{\gamma_a}{r_T} + 
       (w(a_j) - w(a_k))p_T \right\},
\end{align}
establishing \eqref{eqn:lem:e1_res1}.  The inequality \eqref{eqn:lem:e1:res2} follows 
similarly, replacing \eqref{eqn:e1_002} with \eqref{eqn:e1_003}.
\end{proof}
The next corollary shows the corresponding result if the condition on $g$, 
\eqref{eqn:gbounded}, is replaced by \eqref{eqn:gunbdd}, in Assumption \ref{A2:A}.
\begin{corollary}\label{cor:a_xi}
Let Assumptions \ref{A1:GT}, \ref{A2:meshregularity} and \ref{A2:A} hold, with $g$
satisfying \eqref{eqn:gunbdd}.
Let $w, u \in \cV$, and $z(t) \in \cV, ~ 0 \le t \le 1$. 
Let $a_i$, $a_j$ and $a_k$ be the three vertices
of $T \in \cT_{c}$, ordered so that $w(a_i) \ge w(a_j) \ge w(a_k)$ with $w(a_{i}) >0$ and  $w(a_k) \le 0$. 
Let $v$ be given by Definition \ref{def:test}.
Assume there is a constant $p_T > 0$, for which the constants 
$\lambda_0$ and $\Lambda_1$ of \eqref{eqn:A2:pos}, and
$C_f$ of \eqref{eqn:fbound}, satisfy the relation
\begin{align}\label{eqn:pT0}
p_T \coloneqq \lambda_0 \cos\theta_j - \Lambda_1 C_f > 0.
\end{align}
\begin{enumerate}
\item  If $w(a_j) \le 0$, namely $w$ is positive only at the vertex $a_{i}$, it holds that
\begin{align}\label{eqn:lem:e1_res10}
&\int_T \int_0^1 \nabla w^T \left( \frac{\partial a}{\partial \xi}
(x,u, \nabla z(t))\right)^T \nabla v \, \dif t \, \dif x 
\nonumber \\
&\ge \frac{1}{2 \sin \theta_j} 
\left\{ (w(a_i) - w(a_j))\frac{\lambda_0}{r_T} + (w(a_j) - w(a_k))p_T\right\}.
\end{align}
\item If $w(a_j) \ge 0$, namely $w$ is positive at both $a_{i}$ and $a_{j}$,  
it holds that 
\begin{align}\label{eqn:lem:e1:res20}
&\int_T \int_0^1 \nabla w^T \left( \frac{\partial a}{\partial \xi}
(x,u, \nabla z(t))\right)^T \nabla v \, \dif t \, \dif x 
\nonumber \\
&\ge \frac{1}{2 \sin \theta_j} 
\left\{ (w(a_i) - w(a_j))p_T + (w(a_j) - w(a_k))\frac{\lambda_0}{r_T}\right\}.
\end{align}
\end{enumerate}
\end{corollary}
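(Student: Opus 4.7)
The plan is to mirror the proof of Lemma~\ref{lemma:a_xi} step by step, starting from the same expansion \eqref{eqn:e1_001}, with the only essential modification being the handling of the $A_{2}g$ term.  Since $g$ no longer satisfies the uniform bound \eqref{eqn:gbounded} but only the relative bound $s|g'(s)| \le \widehat C_{g}\,g(s)$ from \eqref{eqn:gunbdd}, the $A_{2}g'$ contributions in $\partial a/\partial \xi$ cannot be estimated on their own.  Instead, I pair each $A_{2}g'$ term with the corresponding $A_{2}g$ term arising from the undifferentiated $A$ in the decomposition \eqref{eqn:e1_004}, and the condition $\widehat C_{g} \le c_{\min}$ will force these paired combinations to be nonnegative.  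This is precisely why $C_{g}$ drops out of $p_{T}$ in going from \eqref{eqn:pT} to \eqref{eqn:pT0}.

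For the off-diagonal contribution $-\nabla \varphi_{i}^{T}(\partial a/\partial \xi)\nabla \varphi_{k}$, the estimates \eqref{eqn:e1_005} for the $A_{1}f$ piece and \eqref{eqn:e1_005a} for the $A_{0}$ piece go through unchanged. For the $A_{2}$ piece, Cauchy--Schwarz combined with \eqref{eqn:gunbdd} gives
\[
\bigl|(\nabla\varphi_{i}^{T}\nabla z)\,A_{2}(x)\,g'(|\nabla z|)|\nabla z|^{-1}(\nabla z^{T}\nabla\varphi_{k})\bigr| \le A_{2}\,\widehat C_{g}\,g(|\nabla z|)\,|\nabla\varphi_{i}||\nabla\varphi_{k}|,
\]
while the $A_{2}g$ piece contained in $A(x,u,\nabla z)\,\nabla \varphi_{i}^{T}\nabla \varphi_{k}$ contributes $-A_{2}g(|\nabla z|)\cos\theta_{j}|\nabla\varphi_{i}||\nabla\varphi_{k}|$; using $\widehat C_{g} \le c_{\min} \le \cos\theta_{j}$, their combined contribution (with the overall sign of $-\nabla\varphi_{i}^{T}(\partial a/\partial\xi)\nabla\varphi_{k}$) is nonnegative and can be discarded. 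What remains from $A_{0}$ and $A_{1}f$ combines to give the lower bound $p_{T}/(2|T|\sin\theta_{j})$ with $p_{T}$ as in \eqref{eqn:pT0}.

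For the diagonal contribution $\nabla\varphi_{m}^{T}(\partial a/\partial\xi)\nabla\varphi_{m}$, $m \in \{i,k\}$, the same pairing strategy applies. The $A_{0}$ piece is $\ge \lambda_{0}|\nabla\varphi_{m}|^{2}$, and pairing $A_{2}g|\nabla\varphi_{m}|^{2}$ from $A$ with $A_{2}g'|\nabla z|^{-1}(\nabla\varphi_{m}^{T}\nabla z)^{2}$ from the derivative yields $\ge A_{2}g(1-\widehat C_{g})|\nabla\varphi_{m}|^{2} \ge 0$, since $\widehat C_{g} \le c_{\min} \le 1$. Together with the edge-length identity $|\nabla\varphi_{m}|^{2} = |e_{m}|^{2}/(4|T|^{2})$ and \eqref{eqn:edge_ineq}, this produces the leading factor $\lambda_{0}/r_{T}$. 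Integrating in $t$ and $x$ over $T$ and assembling the diagonal and off-diagonal bounds exactly as in the two cases of Lemma~\ref{lemma:a_xi} yields \eqref{eqn:lem:e1_res10} and \eqref{eqn:lem:e1:res20}, respectively.

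The principal obstacle is the coordinated use of the single undifferentiated $A_{2}g$ term in $A$ to cancel derivative contributions on both the diagonal and the off-diagonal sides of the estimate. The threshold $\widehat C_{g} \le c_{\min}$ is sharp for the off-diagonal cancellation, since $\cos\theta_{j}$ can be as small as $c_{\min}$ in the worst-case triangle; on the diagonal, the weaker inequality $\widehat C_{g} \le 1$ suffices, and this is automatically supplied by $c_{\min} \le 1$.
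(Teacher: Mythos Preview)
Your treatment of the off-diagonal term $-\nabla\varphi_i^T(\partial a/\partial\xi)\nabla\varphi_k$ is exactly the paper's: replace \eqref{eqn:e1_005g} by the pairing estimate you wrote (the paper's \eqref{eqn:corg_003}), strengthen \eqref{eqn:e1_005a} by retaining the $A_2 g$ piece of $A$ (the paper's \eqref{eqn:corg_004}), and observe that $\cos\theta_j - \widehat C_g \ge 0$ so the $A_2$ contributions combine nonnegatively, leaving $p_T = \lambda_0\cos\theta_j - \Lambda_1 C_f$ as in \eqref{eqn:corg_005}. Good.

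Where you diverge from the paper is the diagonal term $\nabla\varphi_m^T(\partial a/\partial\xi)\nabla\varphi_m$. The paper does \emph{not} touch this term at all: it simply reuses \eqref{eqn:e1_002}--\eqref{eqn:e1_003}, which invoke the abstract ellipticity constant $\gamma_a$ from \eqref{eqn:A1:elliptic} and yield the factor $\gamma_a/r_T$, not $\lambda_0/r_T$. Your concern about ``coordinated use'' of a single $A_2 g$ term is therefore moot; the diagonal and off-diagonal estimates are decoupled, and the diagonal one needs no new work.

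Your attempt to derive $\lambda_0/r_T$ directly by decomposing the diagonal term has a genuine gap: you account for the $A_0$ and $A_2 g$ pieces but omit the $A_1 f$ pieces. The undifferentiated $A_1 f|\nabla\varphi_m|^2$ is harmless (nonnegative), but the derivative piece $A_1 f'(|\nabla z|)|\nabla z|^{-1}(\nabla\varphi_m^T\nabla z)^2$ can be negative when $f'<0$, and \eqref{eqn:fbound} only gives $|A_1 f'|\,|\nabla z| \le \Lambda_1 C_f$, costing you $\Lambda_1 C_f|\nabla\varphi_m|^2$. So your route actually yields at best $(\lambda_0 - \Lambda_1 C_f)/r_T$, not $\lambda_0/r_T$. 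The appearance of $\lambda_0$ rather than $\gamma_a$ in the displayed bounds \eqref{eqn:lem:e1_res10}--\eqref{eqn:lem:e1:res20} is evidently a typographical slip: the paper's proof reuses the $\gamma_a$ estimate verbatim, and the downstream application in Corollary~\ref{cor:2Dcompare} sets $p_T^\ast = \min\{\lambda_0 c_T - \Lambda_1 C_f,\ \gamma_a/r_T\}$, confirming that $\gamma_a/r_T$ is what is intended and used.
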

The proof is similar to Lemma \ref{lemma:a_xi}, and the differences are summarized
below.
\begin{proof}
The estimates \eqref{eqn:e1_001}-\eqref{eqn:e1_005} remain unchanged, and
\eqref{eqn:e1_005g} is replaced by
\begin{align}\label{eqn:corg_003}
 (\nabla \varphi_i^T \nabla z)A_2(x) \frac{\partial g}{\partial \xi}(|\nabla z|)\nabla \varphi_k
& \le A_2(x) g'(|\nabla z|) \nabla \varphi_i^T \nabla z \nabla z^T \nabla \varphi_k 
  |\nabla z|^{-1}
\nonumber \\
& \le A_2(x) g'(|\nabla z|)|\nabla z| 
\nabla \varphi_i^T \left( \frac{\nabla z \nabla z^T}{\nabla z^T \nabla z} \right)
\nabla \varphi_k 
\nonumber \\
& \le A_2(x) \widehat C_g g(|\nabla z|) \frac{|e_i||e_k|}{4|T|^2}. 
\end{align}
The bound \eqref{eqn:e1_005a} is now replaced by
\begin{align}\label{eqn:corg_004}
A(x,u,\nabla z) \nabla \varphi_i^T \nabla \varphi_k 
& = -A(x,u,\nabla z) \frac{|e_i||e_k|\cos \theta_j}{4|T|^2}
\nonumber \\
& \le -( \lambda_0 + A_2(x)g(|\nabla z|)) \frac{|e_i||e_k|\cos \theta_j}{4|T|^2}.
\end{align}
Using \eqref{eqn:corg_003} and \eqref{eqn:corg_004} in place of 
\eqref{eqn:e1_005g} and \eqref{eqn:e1_005a},  in \eqref{eqn:e1_007} yields
\begin{align}\label{eqn:corg_005}
-\nabla \varphi_i^T \left(\frac{\partial a}{\partial \xi} \right) \nabla \varphi_k
& \ge \frac{|e_i||e_k|}{4|T|^2}\left(\lambda_0\cos\theta_j - \Lambda_1 C_f
+ A_2(x)g(|\xi|)(\cos \theta_j - \widehat C_g) \right)
\nonumber \\
& \ge \frac{|e_i||e_k|}{4|T|^2}(\lambda_0\cos\theta_j - \Lambda_1 C_f)
= \frac{p_T}{2 |T|\sin \theta_j},
\end{align}
under assumption \eqref{eqn:gunbdd}. The remainder of the proof remains unchanged.
\end{proof}
The second term of \eqref{eqn:2Dexpansion} is bounded by the estimates of
Lemma \ref{lemma:A_eta}.
These are similar to the ones found in \cite{PoZh17a}, 
where a Lipschitz assumption replaces the bound on the 
derivative $\partial A/ \partial \eta$. The key idea is to write $|w|$ as a multiple of 
$\delta_T(w) = w(a_i) - w(a_k)$, which can then be factored out of each term in the
expansion \eqref{eqn:2Dexpansion}. The positive part is given by the results of 
Lemma~\ref{lemma:a_xi}, and the parts that may not be positive are controlled 
by the  variance in the coefficients of $u_2$, which functions as a measurable control
as found in Lemma \ref{lemma:A_eta}; and, by the meshsize in the lower order term as given
in Lemma \ref{lemma:b_eta}.

\begin{lemma}\label{lemma:A_eta}
Let Assumptions \ref{A1:GT}, and \ref{A2:meshregularity} hold.
Let $w, u \in \cV$, and $z(t) \in \cV, ~ 0 \le t \le 1$. 
Let $a_i$, $a_j$ and $a_k$ be the three vertices
of $T \in \cT_{c}$, ordered so that $w(a_i) \ge w(a_j) \ge w(a_k)$ with $w(a_{i}) >0$ and  $w(a_k) \le 0$.
Let $v$ be given by Definition \ref{def:test}.
Then, it holds that
\begin{align}\label{eqn:lem:e2:res1}
\int_T \int_0^1
\frac{\partial A}{\partial \eta}(x,z(t),\nabla u) w \nabla u^T \nabla v \, \dif t \, \dif x 
\ge \frac{-\delta_T(w)\delta_T(u)}{2\sin \theta_j}\frac{7 K_\eta}{6} (1 + r_T^{-1}).
\end{align} 
\end{lemma}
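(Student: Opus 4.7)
The plan is to exploit the uniform bound $|\partial A/\partial\eta|\le K_\eta$ from \eqref{eqn:A1:a_eta} together with a pointwise estimate on $w$ and a geometric estimate on the piecewise-constant factor $\nabla u^T\nabla v$ on $T$. Since $u$ and $v$ are piecewise linear on $T$, $\nabla u^T\nabla v$ is a fixed scalar that may be pulled outside the element integral; combining this with the $K_\eta$ bound gives the reduction
\[
\int_T\!\!\int_0^1 \frac{\partial A}{\partial\eta}(x,z(t),\nabla u)\, w\, \nabla u^T\nabla v\,dt\,dx \;\ge\; -K_\eta\,|\nabla u^T\nabla v|\int_T|w|\,dx.
\]

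Next I would control $\int_T|w|$: the vertex ordering $w(a_i)\ge w(a_j)\ge w(a_k)$ with $w(a_i)>0\ge w(a_k)$ forces $|w(a_\ell)|\le w(a_i)-w(a_k)=\delta_T(w)$ at every vertex, so by its barycentric representation $|w|\le\delta_T(w)$ throughout $T$ and $\int_T|w|\,dx\le\delta_T(w)|T|$; a slightly finer $\int_T|w|\le\tfrac{|T|}{3}(|w(a_i)|+|w(a_j)|+|w(a_k)|)\le\tfrac{2}{3}|T|\delta_T(w)$ is also available. For the geometric factor I split into the cases dictated by Definition~\ref{def:test}: in case~1 ($w(a_j)\le 0$), $v|_T=\varphi_i$ so $\nabla v=\nabla\varphi_i$, while in case~2 ($w(a_j)\ge 0$), $v|_T=1-\varphi_k$ so $\nabla v=-\nabla\varphi_k$. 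Using the expansion $\nabla u=(u(a_i)-u(a_j))\nabla\varphi_i+(u(a_k)-u(a_j))\nabla\varphi_k$ from \eqref{eqn:gradid_32} and the inner-product identities \eqref{eqn:gradTgrad}, together with $|u(a_\ell)-u(a_m)|\le\delta_T(u)$, acuteness ($\cos\theta_j\le 1$), and the edge-length relation \eqref{eqn:edge_ineq}, in either case I obtain
\[
|\nabla u^T\nabla v|\;\le\;\delta_T(u)\,\frac{1+r_T^{-1}}{2|T|\sin\theta_j}.
\]

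Multiplying these two estimates then produces a bound of exactly the form $-\tfrac{C K_\eta}{2\sin\theta_j}\delta_T(w)\,\delta_T(u)(1+r_T^{-1})$ claimed by the lemma. The main obstacle is the constant-tracking: the stated $\tfrac{7}{6}$ has to arise uniformly across both vertex-sign cases while invoking the edge-length comparison \eqref{eqn:edge_ineq} and $\cos\theta_j\le 1$ with only the slack needed to consolidate the two cases into a single estimate. A careful accounting --- perhaps retaining the finer $\tfrac{2}{3}|T|\delta_T(w)$ bound in $\int_T|w|$ for one case but not the other, or keeping a piece of the $\cos\theta_j$ factor to avoid over-counting when $\nabla v$ is expanded in the two bases $\nabla\varphi_i$ and $-\nabla\varphi_k$ --- is what should produce the $\tfrac{7}{6}$ factor in a single consolidated bound valid in both subcases of $T\in\cT_c$.
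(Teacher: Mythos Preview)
Your approach is essentially the paper's: bound $|\partial A/\partial\eta|\le K_\eta$, pull the constant $\nabla u^T\nabla v$ outside the integral, estimate it by $\dfrac{\delta_T(u)(1+r_T^{-1})}{2|T|\sin\theta_j}$ via \eqref{eqn:gradid_32}, \eqref{eqn:gradTgrad} and \eqref{eqn:edge_ineq} in each of the two cases, and finish with a bound on $\int_T|w|\,dx$. The paper does exactly this.

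Your only uncertainty, the constant $7/6$, is misplaced. The right-hand side of \eqref{eqn:lem:e2:res1} is \emph{negative}, so proving the inequality with any constant $\le 7/6$ already implies the stated lemma. Your pointwise bound $|w(a_\ell)|\le w(a_i)-w(a_k)=\delta_T(w)$ (valid precisely because $w(a_i)>0\ge w(a_k)$ on $T\in\cT_c$) gives $\int_T|w|\,dx\le |T|\,\delta_T(w)$ and hence constant $1<7/6$; your finer barycentric bound $|w|\le\sum_\ell|w(a_\ell)|\varphi_\ell$ together with $|w(a_i)|+|w(a_k)|=\delta_T(w)$ and $|w(a_j)|\le\delta_T(w)$ gives $\int_T|w|\,dx\le\frac{2}{3}|T|\,\delta_T(w)$, which is sharper still. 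Either way you are done --- no case-by-case ``constant accounting'' is needed. The paper arrives at $7/6$ through a slightly different barycentric rewriting of $|w|$, but that particular route is not required to establish \eqref{eqn:lem:e2:res1} as stated.
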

\begin{proof}
In the case that $w$ has one positive vertex, $\nabla v = \nabla \varphi_i$.
Applying expansion \eqref{eqn:gradid_32} to $\nabla u$, followed by 
\eqref{eqn:regdef}, one finds
\begin{align}\label{eqn:e2_001}
\nabla u^T\nabla v & = (u(a_i) - u(a_j))\nabla \varphi_i^T \nabla \varphi_i
+ (u(a_k) - u(a_j)) \nabla \varphi_k^T \nabla \varphi_i 
\nonumber \\
& \le \delta_T(u)\frac{|e_i||e_k|}{4|T|^2}(1 + r_T^{-1})
\nonumber \\
& = \frac{\delta_T(u)}{2|T|\sin \theta_j} (1 + r_T^{-1}).
\end{align}
In the case that $w$ has two positive vertices, $\nabla v = -\nabla \varphi_k$, 
leading to the same resullt.
\begin{align}\label{eqn:e2_002}
\nabla u^T\nabla v & = -(u(a_i) - u(a_j))\nabla \varphi_i^T \nabla \varphi_k
- (u(a_k) - u(a_j)) \nabla \varphi_k^T \nabla \varphi_k 
\nonumber \\
& \le \frac{\delta_T(u)}{2|T|\sin \theta_j} (1 + r_T^{-1}).
\end{align}
Applying the bound \eqref{eqn:A1:a_eta} on $(\partial A/\partial \eta)$, then yields
\begin{align}\label{eqn:e2_003}
\int_T \int_0^1  \frac{\partial A}{\partial \eta} (x,z(t),\nabla u) w \nabla u^T \nabla v \, \dif t \, \dif x
& \le \frac{K_\eta \delta_T(u)}{2|T|\sin \theta_j} (1 + r_T^{-1})
  \int_T |w|\, \dif x.
\end{align}
As shown in \cite{PoZh17a}, and repeated here for convenience, 
the integral over $T$ of $|w|$, can be bounded in terms of $\delta_T(w)$
making use of $\varphi_i + \varphi_j + \varphi_k = 1$, and the ordering
$w(a_i) \ge w(a_j) \ge w(a_k)$.
\begin{align*}
|w| &= |w(a_i) \varphi_i + w(a_j) \varphi_j + w(a_k) \varphi_k |
  \nonumber \\
    &= |(w(a_i)-w(a_j)) \varphi_i + (w(a_k) -w(a_j)) \varphi_k + 
         w(a_j)(\varphi_i + \varphi_j + \varphi_k) |
  \nonumber \\
    &\le (w(a_i) - w(a_j))\varphi_i + (w(a_j) - w(a_k))\varphi_k + (w(a_j) - w(a_k))
  \nonumber \\
    &= (w(a_i) - w(a_j))\varphi_i + (w(a_j) - w(a_k))(1+\varphi_k). 
\end{align*}
Applying $\int_T \varphi_i \, \dif x = \int_T \varphi_k \, \dif x= |T|/6$, demonstrates
\begin{align}\label{eqn:e2_004}
\int_T|w| \, \dif x \le (w(a_i) - w(a_j)) \frac{|T|}{6} + (w(a_j) - w(a_k))\frac{7|T|}{6}
          \le \delta_T(w) \frac{7|T|}{6}.
\end{align}
Putting together \eqref{eqn:e2_003} and \eqref{eqn:e2_004}, yields the desired result.
\end{proof}

Finally, we consider a bound on the third term of \eqref{eqn:2Dexpansion}.
\begin{lemma}\label{lemma:b_eta}
Let Assumptions \ref{A1:GT}, and \ref{A2:meshregularity} hold.
Let $w, u \in \cV$, and $z(t) \in \cV, ~ 0 \le t \le 1$. 
Let $a_i$, $a_j$ and $a_k$ be the three vertices
of $T \in \cT_{c}$, ordered so that $w(a_i) \ge w(a_j) \ge w(a_k)$. Suppose $w(a_k) \le 0$.
Let $v$ be given by Definition \ref{def:test}.
Then, it holds that
\begin{align}\label{eqn:lem:e3}
\int_T \int_0^1 \frac{\partial b}{\partial \eta}(x,z(t))w v \, \dif x \, \dif t 
  \ge -\delta_T(w) \frac{ 7 B_\eta |T|}{6}.
\end{align}
\end{lemma}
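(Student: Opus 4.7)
The plan is to reduce this estimate to the $L^1$ bound on $|w|$ over $T$ that was already established in the proof of Lemma \ref{lemma:A_eta}, by using only the pointwise size bound on $\partial b/\partial \eta$ rather than any structural properties of its sign. Unlike the first two terms of \eqref{eqn:2Dexpansion}, here we do not need strict positivity from this contribution; we just need a lower bound proportional to $\delta_T(w)|T|$ that can later be absorbed by the strictly positive leading term in Lemma \ref{lemma:a_xi}.

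First, I would discard the (unsigned) contribution of $\partial b/\partial \eta$ by using $0 \le \partial b/\partial \eta \le B_\eta$ from \eqref{eqn:A1:b_eta} together with the fact that $v$, as defined in Definition \ref{def:test}, satisfies $0 \le v \le 1$ pointwise. Since $wv$ may be negative precisely where $w$ is negative, this yields the pointwise estimate
\begin{equation*}
\frac{\partial b}{\partial \eta}(x,z(t)) w v \ge -B_\eta |w|,
\end{equation*}
and integrating in $t \in [0,1]$ and $x \in T$ gives
\begin{equation*}
\int_T \int_0^1 \frac{\partial b}{\partial \eta}(x,z(t)) w v \, dt \, dx \ge -B_\eta \int_T |w| \, dx.
\end{equation*}

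Next, I would invoke the estimate \eqref{eqn:e2_004} derived in the proof of Lemma \ref{lemma:A_eta}, which, under the same ordering $w(a_i) \ge w(a_j) \ge w(a_k)$ with $w(a_k) \le 0$, establishes $\int_T |w| \, dx \le \frac{7 |T|}{6} \delta_T(w)$. Combining this with the preceding inequality immediately produces \eqref{eqn:lem:e3}. There is no real obstacle here: the assumption $w(a_k) \le 0$ (needed so that $w$ actually changes sign on $T$ and the decomposition via $\varphi_i + \varphi_j + \varphi_k = 1$ from the proof of Lemma \ref{lemma:A_eta} yields the $7/6$ factor) is exactly the hypothesis we are given, and the test function $v$ plays no subtle role beyond its bound $0 \le v \le 1$. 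The main conceptual point is simply to package the lower-order term in a form, proportional to $\delta_T(w)|T|$, that matches the scaling of the positive leading contribution $\frac{1}{2\sin\theta_j}\delta_T(w)\frac{\gamma_a}{r_T}$ from Lemma \ref{lemma:a_xi}, so that a mesh-size condition on $|T|$ can guarantee strict positivity of the total expansion \eqref{eqn:2Dexpansion} in the forthcoming Theorem \ref{thm:2Dcompare}.
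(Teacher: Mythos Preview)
Your proposal is correct and follows essentially the same argument as the paper: bound the integrand pointwise using $0 \le \partial b/\partial \eta \le B_\eta$ and $0 \le v \le 1$ to get $\int_T \int_0^1 (\partial b/\partial \eta) wv \ge -B_\eta \int_T |w|$, then invoke the bound \eqref{eqn:e2_004} from the proof of Lemma~\ref{lemma:A_eta}. The paper's proof is just a terser two-line version of what you wrote.
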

\begin{proof}
Applying the condition \eqref{eqn:A1:b_eta} bounding $(\partial b/\partial \eta)$, 
and \eqref{eqn:e2_004} bounded $|w|$, reveals
\begin{align*}
\int_T \int_0^1 \frac{\partial b}{\partial \eta}(x,z(t))w v \, \dif x \, \dif t 
& \ge -B_\eta \int_T |w| \, \dif x 
  \ge -\delta_T(w)\frac{ 7 B_\eta |T|}{6}.
\end{align*}
\end{proof}
Notably, \eqref{eqn:lem:e3} can be controlled by the area $|T|$ in the numerator, 
rather than $\delta_T(u)$ as in the result of Lemma \ref{lemma:A_eta}.
Effectively, this introduces a global meshsize condition as in the 1D case if the lower
order term $b(x,u)$ appears in \eqref{eqn:PDEstrong}. 
\subsection{Comparison theorem in two dimensions}\label{subsec:2Dcompare}
We are now ready to combine the results of Lemmas \ref{lemma:a_xi}, \ref{lemma:A_eta}
and \ref{lemma:b_eta} to prove a discrete comparison theorem.
\begin{theorem}[Two dimensional comparison theorem]
\label{thm:2Dcompare}
Let $u_1 \in \cV$ be a subsolution of \eqref{eqn:PDEdiscrete} as in 
\eqref{eqn:subsoln}, and let $u_2 \in \cV$ be a supersolution of the same problem, 
 as in \eqref{eqn:supsoln}. Let $w = u_1 - u_2 \in \cV$.   
Let Assumptions \ref{A1:GT}, \ref{A2:meshregularity} and \ref{A2:A} hold, with 
$g$ satisfying \eqref{eqn:gbounded}.
Assume $\lambda_0$, $\Lambda_1$, $\Lambda_2$ and $C_{f}, C_g$ of 
Assumption \ref{A2:A}, 
and $c_{min}$ of \eqref{eqn:minmax} satisfy the relation
\begin{align}\label{eqn:pOmega}
\lambda_0 c_{min} - \Lambda_1 C_f -\Lambda_2 C_g> 0.
\end{align}
Define the positive constant for each $T \in \cT$
\begin{align}\label{eqn:pstar}
p^\ast_T \coloneqq \min\{\lambda_0 c_T - \Lambda_1 C_f -\Lambda_2 C_g, \gamma_a/r_T\},
\end{align}
with $\gamma_a$ ~from \eqref{A1:GT}, $c_T$ from \eqref{eqn:cTdef}
and $r_T$ from \eqref{eqn:edge_ineq}.
Then, the satisfaction of the condition
\begin{align}\label{eqn:2Dcond}
\min_{T \in \cT} 
\left\{ 
p^\ast_T - \delta_T(u_2) \frac{7 K_\eta (1 + r_T^{-1})}{6} - \frac{7 B_\eta |T|s_T}{3}
\right\} > 0, 
\end{align}
with $s_T$ from \eqref{eqn:sTdef}, implies that $u_1 \le u_2$ in $\Omega$.
\end{theorem}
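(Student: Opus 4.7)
The plan is to argue by contradiction using the reduced inequality \eqref{eqn:red_subsup} with the test function $v$ from Definition \ref{def:test}, exactly as in the one-dimensional case. Suppose for contradiction that $w = u_1 - u_2$ is positive somewhere in $\Omega$. Because $w$ vanishes on $\Gamma_D$ and is piecewise linear, the set $\cT_c$ from \eqref{eqn:Tcdef} is nonempty and every $T \in \cT_c$ satisfies $\delta_T(w) > 0$. The goal is to show that under \eqref{eqn:2Dcond} the sum over $\cT_c$ on the right-hand side of \eqref{eqn:red_subsup} is strictly positive, contradicting its nonpositivity and forcing $w \le 0$ throughout $\Omega$.

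On each $T \in \cT_c$, I would apply the three lemmas of \S\ref{subsec:2Dlemmas}. For the leading term, Lemma \ref{lemma:a_xi} in either case (one or two positive vertices) delivers a lower bound of the form
\[
\frac{1}{2\sin\theta_j}\bigl\{\alpha_1(w(a_i)-w(a_j)) + \alpha_2(w(a_j)-w(a_k))\bigr\},
\]
with $\{\alpha_1,\alpha_2\} = \{p_T,\ \gamma_a/r_T\}$. Since $\cos\theta_j \ge c_T$ one has $p_T \ge \lambda_0 c_T - \Lambda_1 C_f - \Lambda_2 C_g$, so $\min(\alpha_1,\alpha_2) \ge p^\ast_T$ by \eqref{eqn:pstar}; the telescoping identity $(w(a_i)-w(a_j)) + (w(a_j)-w(a_k)) = \delta_T(w)$ then collapses both cases to the uniform bound $\frac{p^\ast_T\,\delta_T(w)}{2\sin\theta_j}$, whose positivity is guaranteed by $c_T \ge c_{\min}$ combined with \eqref{eqn:pOmega}. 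Lemma \ref{lemma:A_eta} (applied with $u=u_2$) and Lemma \ref{lemma:b_eta} supply matching lower bounds for the cross and lower-order terms. Adding the three contributions, pulling out the common factor $\frac{\delta_T(w)}{2\sin\theta_j}$, and using $\sin\theta_j \le s_T$ to absorb $2\sin\theta_j$ into the $b$-derivative bound, each element contribution is at least
\[
\frac{\delta_T(w)}{2\sin\theta_j}\biggl\{p^\ast_T - \delta_T(u_2)\frac{7K_\eta(1+r_T^{-1})}{6} - \frac{7B_\eta|T|s_T}{3}\biggr\},
\]
which is strictly positive by \eqref{eqn:2Dcond}. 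Summing over the (nonempty) set $\cT_c$ contradicts \eqref{eqn:red_subsup}.

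The main obstacle is the algebraic bookkeeping: unifying the one-positive-vertex and two-positive-vertex cases of Lemma \ref{lemma:a_xi} into a single bound driven by $p^\ast_T$, and balancing the factor $1/\sin\theta_j$ coming from the principal part and from Lemma \ref{lemma:A_eta} against the bare area factor $|T|$ produced by Lemma \ref{lemma:b_eta}, so that the combined bracketed expression matches precisely the form of the hypothesis \eqref{eqn:2Dcond}. Once these reductions are handled, the proof closes in the same contradiction pattern as Theorem \ref{thm:1Dcompare}.
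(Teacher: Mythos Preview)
Your proposal is correct and follows essentially the same approach as the paper: assume $w>0$ somewhere, invoke \eqref{eqn:red_subsup} with the test function of Definition~\ref{def:test}, and on each $T\in\cT_c$ combine Lemmas~\ref{lemma:a_xi}, \ref{lemma:A_eta}, and \ref{lemma:b_eta}, using $\cos\theta_j\ge c_T$ and the telescoping sum to reduce both cases of Lemma~\ref{lemma:a_xi} to the single bound $p^\ast_T\,\delta_T(w)/(2\sin\theta_j)$, and using $\sin\theta_j\le s_T$ to match the factor on the lower-order term. The paper's proof is identical in structure and in every substantive detail.
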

\begin{proof}
Assume $w = u_1 - u_2$ is positive somewhere in $\Omega$. This implies $w(a) > 0$ 
for some vertex $a \in \cD$.  
Let the test function $v\in\cV^+$ be given by Definition \ref{def:test}.
Then, from \eqref{eqn:red_subsup}, it holds that
\begin{align}\label{eqn:t2_001}
&  \sum_{T \in \cT_c}\int_T \int_0^1 
  \left( \frac{\partial a}{\partial \xi}(x, u_1, \nabla z(t)) \nabla w\right) \tdot \nabla v
 + \left( \frac{\partial A}{\partial \eta }(x, z(t), \nabla u_2)\, w \right) \nabla u_2  \tdot \nabla v
\nonumber \\
&+ \frac{\partial b}{\partial \eta}(x,z(t)) w v \, \dif t \, \dif x \le 0,
\end{align}
where $\cT_c$ defined in \eqref{eqn:Tcdef} 
is the set of all elements $T$ where $w$ is positive on either one or two vertices.
The hypothesis \eqref{eqn:pOmega} together with Lemma \ref{lemma:a_xi} implies 
for any $T \in \cT_c$, it holds that
\begin{align}\label{eqn:t2_002}
\int_T \int_0^1
\left( \frac{\partial a}{\partial \xi}(x, u_1, \nabla z(t)) \nabla w\right) \tdot \nabla v
\, \dif t \, \dif x
\ge \frac {\delta_T(w)}{2 \sin \theta_{T,j}} p^\ast_T,
\end{align}
where $\theta_{T,j}$ refers to $\theta_j$ of triangle $T$ with respect to the 
local indexing,
where $a_i, a_j$ and $a_k$ are the three vertices of $T$, ordered so that
$w(a_i) \ge w(a_j) \ge w(a_k)$.

Lemma \ref{lemma:b_eta}, together with the inequality $\sin\theta_j \le s_T$, where
$s_T$ is the sine of the maximum angle of $T$ as in \eqref{eqn:sTdef}, shows for
any $T \in \cT_c$ that
\begin{align}\label{eqn:t2_003}
\int_T \int_0^1 \frac{\partial b}{\partial \eta}(x,z(t)) w v \, \dif t \, \dif x 
\ge -\left(\frac{\delta_T(w)}{2 \sin \theta_{T,j}}\right) \frac{7 B_\eta |T|s_T}{3}.
\end{align}
Putting \eqref{eqn:t2_002} and \eqref{eqn:t2_003} and the result of 
Lemma \ref{lemma:A_eta} together into \eqref{eqn:red_subsup} yields
\begin{align}\label{eqn:t2_004}
&\int_{\Omega} (a(x,u_1,\nabla u_1) - a(x,u_2,\nabla u_2)) \cdot \nabla v 
  + (b(x, u_1)v - b(x,u_2) )v \, \dif x 
  \nonumber \\ 
  & \ge \sum_{T \in \cT_c} 
\frac{\delta_T(w)}{2 \sin \theta_{T,j}} \left\{
p^\ast_T - \delta_T(u_2) \frac{7 K_\eta (1 + r_T^{-1})}{6} - \frac{7 B_\eta |T|s_T}{3}
\right\} > 0.
\end{align} 
The positivity of \eqref{eqn:t2_004} is in direct contradiction to the nonpositivity
from \eqref{eqn:t2_001}, repeated from \eqref{eqn:red_subsup}.  This demonstrates 
that under the hypotheses of the theorem, the function $v$ must be nowhere positive,
which requires $u_1 \le u_2$ in $\Omega$.
\end{proof}
Replacing Lemmas \ref{lemma:a_xi} with Corallary \ref{cor:a_xi} and \ref{lemma:b_eta} 
allows us to prove a second comparison result.
\begin{corollary}
\label{cor:2Dcompare}
Let $u_1 \in \cV$ be a subsolution of \eqref{eqn:PDEdiscrete} as in 
\eqref{eqn:subsoln}, and let $u_2 \in \cV$ be a supersolution of the same problem, 
 as in \eqref{eqn:supsoln}. Let $w = u_1 - u_2 \in \cV$.   
Let Assumptions \ref{A1:GT}, \ref{A2:meshregularity} and \ref{A2:A} hold, with 
$g$ satisfying \eqref{eqn:gunbdd}.
Assume $\lambda_0, \Lambda_1$ and $C_{f}$ of Assumption \ref{A2:A}, 
and $c_{min}$ of \eqref{eqn:minmax} satisfy the relation
\begin{align*}
\lambda_0 c_{min} - \Lambda_1 > 0.
\end{align*}
Define the positive constant for each $T \in \cT$
\begin{align}\label{eqn:pstar_c}
p^\ast_T \coloneqq \min\{\lambda_0 c_T - \Lambda_1 C_f, \gamma_a/r_T\},
\end{align}
with $\gamma_a$ ~from \eqref{A1:GT}, $c_T$ from \eqref{eqn:cTdef}
and $r_T$ from \eqref{eqn:edge_ineq}.
Then, the satisfaction of the condition
\begin{align*}
\min_{T \in \cT} 
\left\{ 
p^\ast_T - \delta_T(u_2) \frac{7 K_\eta (1 + r_T^{-1})}{6} - \frac{7 B_\eta |T|s_T}{3}
\right\} > 0, 
\end{align*}
with $s_T$ from \eqref{eqn:sTdef}, implies that $u_1 \le u_2$ in $\Omega$.
\end{corollary}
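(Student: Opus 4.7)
The plan is to follow the exact same structure as the proof of Theorem \ref{thm:2Dcompare}, with the single modification being that Lemma \ref{lemma:a_xi} is replaced by Corollary \ref{cor:a_xi}, which is the variant that handles the growth condition \eqref{eqn:gunbdd} on $g$ instead of the boundedness condition \eqref{eqn:gbounded}. The other two technical lemmas, namely Lemma \ref{lemma:A_eta} and Lemma \ref{lemma:b_eta}, make no structural assumption on $g$ and therefore carry over verbatim.

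First I would argue by contradiction: assume $w = u_1 - u_2$ is positive at some $a \in \cD$, so that $\cT_c$ is nonempty, and instantiate the test function $v \in \cV^+$ from Definition \ref{def:test}. Inequality \eqref{eqn:red_subsup} then reduces the problem to showing that the sum over $T \in \cT_c$ of the three local integrands is strictly positive, contradicting its nonpositivity. For each $T \in \cT_c$, I would then estimate the three contributions separately, using local notation $a_i, a_j, a_k$ with $w(a_i) \ge w(a_j) \ge w(a_k)$, $w(a_i)>0$ and $w(a_k)\le 0$.

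For the principal term, I apply Corollary \ref{cor:a_xi} (with its hypothesis $\lambda_0 \cos\theta_j - \Lambda_1 C_f > 0$ guaranteed element-wise by the assumption $\lambda_0 c_{min} - \Lambda_1 > 0$ combined with $c_T \ge c_{min}$, possibly adjusting the $\Lambda_1$-factor as $\Lambda_1 C_f$ matching \eqref{eqn:pstar_c}). The two cases of one versus two positive vertices combine, since $(w(a_i)-w(a_j)) + (w(a_j)-w(a_k)) = \delta_T(w)$ and each coefficient is at least $p^\ast_T$ as defined in \eqref{eqn:pstar_c}, yielding the lower bound $\delta_T(w) p^\ast_T / (2 \sin\theta_{T,j})$. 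For the middle term, Lemma \ref{lemma:A_eta} contributes $-\delta_T(w)\delta_T(u_2)(7 K_\eta/6)(1+r_T^{-1})/(2\sin\theta_{T,j})$. For the lower-order term, Lemma \ref{lemma:b_eta} together with $\sin\theta_{T,j} \le s_T$ yields $-\delta_T(w) (7 B_\eta |T| s_T / 3)/(2\sin\theta_{T,j})$.

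Summing over $T \in \cT_c$, the right-hand side of \eqref{eqn:red_subsup} is bounded below by
\[
\sum_{T \in \cT_c} \frac{\delta_T(w)}{2\sin\theta_{T,j}}\left\{ p^\ast_T - \delta_T(u_2)\frac{7 K_\eta(1+r_T^{-1})}{6} - \frac{7 B_\eta |T| s_T}{3} \right\},
\]
which is strictly positive under the hypothesized condition, since at least one $T \in \cT_c$ carries $\delta_T(w) > 0$. This contradicts the nonpositivity coming from subtracting \eqref{eqn:supsoln} from \eqref{eqn:subsoln}, so $w$ cannot be positive anywhere, and $u_1 \le u_2$ in $\Omega$. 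The only real subtlety, and what I would verify carefully, is that the element-wise hypothesis \eqref{eqn:pT0} in Corollary \ref{cor:a_xi} follows uniformly from the global condition $\lambda_0 c_{min} - \Lambda_1 > 0$ via $c_T \ge c_{min}$; otherwise the argument is a routine reassembly of the previous estimates.
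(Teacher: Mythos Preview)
Your proposal is correct and follows exactly the approach in the paper: the paper's proof is a single sentence stating that the result follows from the proof of Theorem~\ref{thm:2Dcompare} by replacing Lemma~\ref{lemma:a_xi} with Corollary~\ref{cor:a_xi} in the estimate \eqref{eqn:t2_002}. Your more detailed write-up, including the observation that Lemmas~\ref{lemma:A_eta} and~\ref{lemma:b_eta} carry over unchanged and the flagged mismatch between the hypothesis $\lambda_0 c_{min} - \Lambda_1 > 0$ and the quantity $\lambda_0 c_T - \Lambda_1 C_f$ appearing in $p_T^\ast$, is consistent with (and in fact more careful than) the paper's own treatment.
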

\begin{proof}
The proof follows directly by replacing Lemma \ref{lemma:a_xi} by 
Corollary \ref{cor:a_xi} in \eqref{eqn:t2_002} 
of the proof of Theorem \ref{thm:2Dcompare}.
\end{proof}

\begin{remark}[Uniqueness of finite element solutions]
An important consequence of the comparison theorem is the uniqueness of solutions
to \eqref{eqn:PDEdiscrete}, which as demonstrated holds in two dimensions under the 
hypotheses of Theorem \ref{thm:2Dcompare}, under the condition
\[
\min_{T \in \cT}
\left\{
p^\ast_T - \delta_T(u) \frac{7 K_\eta (1 + r_T^{-1})}{6} - \frac{7 B_\eta |T|s_T}{3} 
\right\} > 0,
\]
with $p_T^\ast$ given either by \eqref{eqn:pstar} or \eqref{eqn:pstar_c}.
The quantities involved to verify this condition consist of global constants bounding 
the problem data and local quantities characterizing the triangulation and the 
computed solution $u$. The global constants are
$\lambda_0, \Lambda_1, \Lambda_2$
and $C_f, C_g$ or $\widehat C_g$ of Assumption \ref{A2:A} 
and $\gamma_a, K_\eta, B_\eta$ of Assumption \ref{A1:GT}. 
The necessary triangulation data describes
the area $|T|$ and the smallest and largest angles of each element $T \in \cT$: 
$c_T$, $r_T$ and $s_T$ of \eqref{eqn:cTdef}-\eqref{eqn:regdef}. 
Finally, it is required to check the greatest
difference between nodal values of the computed solution on each element, $\delta_T(u)$.
All these quantities  can be easily and efficiently computed in practice. 
\end{remark}
Notably, the global meshsize condition comes only from the lower order term,
and solutions to the pure diffusion problem can be demonstrated unique without
a globally small meshsize.  Essentially, the meshsize needs to be small where
the gradient is large.

\section{A Semilinear problem}\label{sec:semilinear}
In this section, we consider the discrete comparison principle for a 
special case of the problem class \eqref{eqn:PDEstrong}, the semilinear problem:
\begin{align}\label{eqn:PDEsemiL}
-\Delta u  + b(x,u) = 0 ~\inn~ \Omega \subset \R^d, ~ u = 0 \on \partial \Omega.
\end{align}
For simplicity, we consider the homogeneous Dirichlet problem in one and two dimensions.
The nonlinearity $b(x,u)$ is assumed to satisfy the requirements of Assumption
\ref{A1:GT}. The  discrete version of problem \eqref{eqn:PDEsemiL} is:
Find $u \in \cV \subset H_0^1(\Omega)$ such that
\begin{align}\label{eqn:semiDiscrete}
\int_\Omega \nabla u^T \nabla v + b(x,u) v \dif x= 0, \tforall v \in \cV.
\end{align}
Based on the previous sections, 
we can obtain the following discrete comparison result for 
\eqref{eqn:semiDiscrete}, which is a simplified version of 
Theorem~\ref {thm:1Dcompare} and Theorem~\ref{thm:2Dcompare} in the semilinear case.
However, we find this technique leads to a suboptimal mesh condition.  We then
improve the condition with a linear algebra argument in Theorem \ref{thm:semi}.  
While the techniques of Theorem \ref{thm:semi} do not apply to the quasilinear problem
\eqref{eqn:PDEstrong}, they suggest sharper criteria for comparison 
theorems and uniqueness may be attainable.  We include both approaches for the 
semilinear problem \eqref{eqn:PDEsemiL} for completeness.
\begin{theorem}
\label{thm:semilinear}
Let $u_1 \in \cV$ be a subsolution of \eqref{eqn:semiDiscrete},
and let $u_2 \in \cV$ be a supersolution of \eqref{eqn:semiDiscrete}.  
Let $w = u_1 - u_2 \in \cV$.  Let $b(x,u)$ satisfy the Assumption \ref{A1:GT}, 
and for the 2D problem, let the partition satisfy Assumption \ref{A2:meshregularity}.
Under the respective conditions for the 1D and 2D problems:
\begin{align}
\label{eqn:1DsemiCond0}
h_k^2 &< \frac{2}{B_\eta}, ~ k = 1, 2, \ldots, n, ~\text{ for }~ d = 1,
\\\label{eqn:2DsemiCond0}
|T| &< \frac{3}{7B_\eta} \min_{k = 1, 2,3}\cot\theta_{T,k}, 
~\text{ for each } T \in \cT, ~\text{ for }~ d=2,
\end{align}
it holds that $u_{1}\le u_{2}$ in $\Omega.$
\end{theorem}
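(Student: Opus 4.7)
The plan is to derive Theorem~\ref{thm:semilinear} as a direct specialization of the quasilinear comparison theorems already proved. For problem~\eqref{eqn:semiDiscrete} one has $a(x,\eta,\xi) = \xi$ and $A \equiv 1$, so in the notation of Assumptions~\ref{A1:GT} and~\ref{A2:A} the relevant constants are $\gamma_a = 1$, $K_\eta = 0$, $\lambda_0 = 1$, and $\Lambda_1 = \Lambda_2 = 0$. In particular the middle term of the master inequality~\eqref{eqn:red_subsup} involving $\partial A/\partial \eta$ drops out entirely, and only the principal diffusion contribution and the lower-order term remain to be balanced against one another.

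For the one dimensional case, I would revisit the penultimate line in the proof of Theorem~\ref{thm:1Dcompare}, which takes the form
\begin{align*}
0 \ge \sum_{\cI_k \in \cT_c} \frac{|w(a_k) - w(a_{k-1})|}{h_k}\left(\gamma_a - |u_2(a_k) - u_2(a_{k-1})|\frac{K_\eta}{2} - \frac{B_\eta h_k^2}{2}\right).
\end{align*}
Substituting $\gamma_a = 1$ and $K_\eta = 0$ reduces this to
\begin{align*}
0 \ge \sum_{\cI_k \in \cT_c} \frac{|w(a_k) - w(a_{k-1})|}{h_k}\left(1 - \frac{B_\eta h_k^2}{2}\right),
\end{align*}
and each parenthetical factor is strictly positive precisely under hypothesis~\eqref{eqn:1DsemiCond0}, contradicting the assumption that $w > 0$ somewhere and forcing $u_1 \le u_2$.

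For the two dimensional case, I would invoke Theorem~\ref{thm:2Dcompare} directly. With the specialized constants the positivity assumption~\eqref{eqn:pOmega} becomes $c_{min} > 0$, which holds by the acuteness condition of Assumption~\ref{A2:meshregularity}, and the quantity in~\eqref{eqn:pstar} becomes $p^\ast_T = \min\{c_T,\, 1/r_T\}$. Since the regularity ratio satisfies $r_T \le 1$ and acute angles give $c_T \le 1$, it follows that $p^\ast_T = c_T$. Because $K_\eta = 0$ the $\delta_T(u_2)$-term in~\eqref{eqn:2Dcond} vanishes, leaving the local condition $|T| < 3 c_T/(7 B_\eta s_T)$ on each $T \in \cT$. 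The only nontrivial step is the trigonometric identification
\begin{align*}
\frac{c_T}{s_T} = \frac{\cos \theta_{T,\max}}{\sin \theta_{T,\max}} = \cot \theta_{T,\max} = \min_{k = 1,2,3} \cot \theta_{T,k},
\end{align*}
where the last equality uses that $\cot$ is strictly decreasing on $(0,\pi/2)$, so the minimum cotangent is attained at the largest angle (and the minimum cosine and maximum sine of an acute triangle are both attained there). This converts the bound into~\eqref{eqn:2DsemiCond0}, and the conclusion follows.

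There is no real obstacle beyond this bookkeeping; the only place a reader might pause is the identification of $c_T/s_T$ with $\min_k \cot \theta_{T,k}$ in the two dimensional case, which makes the somewhat abstract quantities $p^\ast_T$, $c_T$, and $s_T$ from the general theorem collapse to the clean geometric criterion in~\eqref{eqn:2DsemiCond0}.
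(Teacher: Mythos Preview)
Your proposal is correct and arrives at exactly the stated conditions, but it proceeds by specialization of the general quasilinear theorems rather than by the direct computation the paper carries out. In one dimension the two approaches coincide: you sensibly bypass the stated condition~\eqref{eqn:1Dcond} (which degenerates when $K_\eta = 0$) and return to the penultimate inequality in the proof of Theorem~\ref{thm:1Dcompare}, which is precisely what the paper does as well. In two dimensions the paper does \emph{not} route through Theorem~\ref{thm:2Dcompare}; instead it re-derives the diffusion lower bound from scratch for the Laplacian, obtaining on each $T \in \cT_c$ (for the case $w(a_j)\le 0$, say)
\[
\int_T \nabla w^T \nabla v \,\dif x
= \tfrac12(w(a_i)-w(a_j))(\cot\theta_k+\cot\theta_j) + \tfrac12(w(a_j)-w(a_k))\cot\theta_j
\ge \tfrac12\,\delta_T(w)\cot\theta_j,
\]
and then combines this with Lemma~\ref{lemma:b_eta}. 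Your route through $p^\ast_T = c_T$ and the identification $c_T/s_T = \cot\theta_{T,\max} = \min_k\cot\theta_{T,k}$ is a valid shortcut that reuses existing machinery; the paper's direct argument is shorter and more transparent because the simple structure of the Laplacian makes Lemma~\ref{lemma:a_xi} unnecessary. Either way the final mesh condition is the same.
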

\begin{proof}
We proceed by contradiction. 
Assume $w = u_{1} - u_{2}$ is positive on at least one vertex of $\cT$. 
Then $w$ changes signs on each $T\in \cT_c$, which must be nonempty. 
Let $v$ be defined as in Definition~\ref{def:test}. 
	
In the 1D case, similar to Theorem~\ref {thm:1Dcompare} on each $\cI_{k} \in \cT_{c}$, 
the product $w'v' = |w(a_{k}) - w(a_{k-1})|/h_{k}^{2}$. 
Thus by condition \eqref{eqn:1DsemiCond0} we have 
\[
\int_{\cI_{k}} w' v' + (b(x, u_{1}) -b(x, u_{2}))v \, \dif x 
\ge |w(a_{k}) - w(a_{k-1})|
\left(\frac 1{h_{k}} - \frac{B_{\eta}h_{k}}{2} \right) >0.
\]
This contradicts the condition that $u_{1}$ and $u_{2}$ are sub- and supersolutions of 
\eqref{eqn:semiDiscrete}.
	
In the 2D case, on each $T\in \cT_{c}$, label the vertices 
$a_{i}$, $a_{j}$ and $a_{k}$ such that $w(a_{i}) \ge w(a_{j}) \ge w(a_{k})$. 
Then with the Assumption \ref{A2:meshregularity}, it holds for the case
$w(a_j) \le 0$, that
\begin{align*}
\int_{T} \nabla w \nabla  v \, \dif x 
&= (w(a_{i}) -w(a_{j})) \int_{T}\nabla \varphi_{i}^{T}\nabla \varphi_{i} \, \dif x 
  + (w(a_{k}) -w(a_{j})) \int_{T}\nabla \varphi_{k}^{T}\nabla \varphi_{i} \,  \dif x\\
&=\frac{1}{2}(w(a_{i}) -w(a_{j})) (\cot \theta_{k} + \cot \theta_{j}) 
 + \frac{1}{2}(w(a_{j})-w(a_{k}))\cot \theta_{j}\\
&\ge \frac{1}{2}(w(a_{i}) - w(a_{k})) \cot \theta_{j}\\
&=\frac{1}{2}\delta_{T}(w) \cot \theta_{j}.
\end{align*}
As in Lemma~\ref{lemma:a_xi}, the case $w(a_j) > 0$ follows similarly.
By Lemma~\ref{lemma:b_eta} and \eqref{eqn:2DsemiCond0}, we have 
\[
\int_{T} \nabla w \nabla v  \, \dif x 
 + \int_{T}(b(x, u_{1}) -b(x,u_{2})) v \, \dif x 
\ge \frac{1}{2}\delta_{T}(w)\left(\cot \theta_{j} - \frac{7B_{\eta} |T|}{3}\right) 
>0.
\]
Under \eqref{eqn:2DsemiCond0} this yields a contradiction, establishing the result.
\end{proof}

A more direct linear algebraic approach to determine a discrete comparison 
principle which implies the uniqueness of \eqref{eqn:semiDiscrete} is next demonstrated. 
We can derive the discrete comparison
principle by considering a discrete maximum principle for the difference
$w = u_1 - u_2$, where $u_1\in \cV$ is a subsolution of \eqref{eqn:semiDiscrete}, and 
$u_2\in \cV$ is a supersolution of \eqref{eqn:semiDiscrete}.
Similarly to \eqref{eqn:decomp_T}, the piecewise linear $w \in \cV$ satisfies 
\begin{align}\label{eqn:semi_001}
&\int_\Omega \nabla w ^T \nabla v \, \dif x + 
 \int_\Omega \int_{0}^{1}\frac{\partial b}{\partial \eta}(x,z(t)) w v \,\dif t \, \dif x 
= \int_T f_\delta  v \, \dif x, \tforall v \in \cV,
\end{align}
where $z(t) = t u_1 + (1-t) u_2$, and $f_\delta$ is some nonpositive
$L^2$ integrable function defined by the left hand side of \eqref{eqn:semi_001}.
Clearly $f_\delta$ satisfies
$
\int_\Omega f_\delta v \, \dif x \le 0,~\tforall v \in \cV^+.
$
Equation \eqref{eqn:semi_001} is a linear reaction-diffusion equation with a 
bounded, nonnegative reaction term $c(x) = \int_0^1 \partial b/\partial \eta (x,z(t))\, \dif t$.
It is immatieral that the reaction term $c(x)$ is not explicitly available.
As such, the maximum principle in \S 3. of \cite{BrKoKr08} applies, establishing 
under the appropriate mesh conditions that $w \le 0$ on $\Omega$, hence $u_1 \le u_2$. 
To make this article self-contained, the argument of \cite{BrKoKr08} is summarized below.

Let $n_{dof}$ be the cardinality of $\cD$, the number of interior vertices of $\cT$.
The approximation $w \in \cV$ is a linear combination of basis functions given by
$
w = \sum_{i = 1}^{n_{dof}} W_i \varphi_i,
$
with $W = (W_1, \ldots W_{n_{dof}})^T$ the corresponding vector of coefficients.
The discrete form of the problem \eqref{eqn:semiDiscrete} is recovered by the solution
to the linear algebra system
\begin{align}\label{eqn:semi_003}
A W = F, ~\with~ A = S + M, \quad A = (a_{ij}), \quad M = (m_{ij}), \quad F = (f_j),
\end{align}
for stiffness matrix $S$ and mass matrix $M$ defined entrywise by
\begin{align}\label{eqn:semi_004}
s_{ij}&=s_{ji}=\sum_{T \in \cT} \int_T \nabla \varphi_i^T \nabla \varphi_j \, \dif x,
\nonumber \\ 
m_{ij}&=m_{ji}=\sum_{T \in \cT} \int_T \int_0^1 \frac{\partial b}{\partial \eta}(x,z(t)) 
        \varphi_i \varphi_j \, \dif t \, \dif x.
\end{align}
The load vector is given by
$f_j   = \sum_{T \in \cT} \int_T f_\delta \varphi_j \, \dif x$. Each 
$f_j$ is nonpositive, from \eqref{eqn:semi_001}. 

From \eqref{eqn:semi_003}, it is sufficient to show that $A^{-1}$ is entrywise 
nonnegative, to establish that each $W_{j}$ is nonpositive, from which it follows that 
$w \le 0$ and $u_1 \le u_2$.  
This is established by showing $A$ is a \emph{Stieltjes matrix}, meaning
$A$ is symmetric positive definite with nonpositive off-diagonal entries
(see for example \cite[Definition 3.23]{Varga.R2000}). 
\begin{remark}
As mentioned in \cite[\S 3.]{BrKoKr08}, it is easier 
to show $A$ is a Stieltjes matrix than an $M$ matrix, as it is not necessary to 
show irreducibility.  It also makes the current argument unsuitable for the full 
quasilinear problem \eqref{eqn:PDEstrong}, as the resulting linearized equation
would induce a nonsymmetric matrix.
\end{remark} 

The next theorem is a restatement of \cite[Theorem 3.7]{BrKoKr08}, reframed 
in the present context.
\begin{theorem}\label{thm:semi}
Let $u_1 \in \cV$ be a subsolution of \eqref{eqn:semiDiscrete},
and let $u_2 \in \cV$ be a supersolution of \eqref{eqn:semiDiscrete}.  
Let $w = u_1 - u_2 \in \cV$.  Let $b(x,u)$ satisfy the Assumption \ref{A1:GT}, 
and for the 2D problem, let the partition satisfy Assumption \ref{A2:meshregularity}.
Then $A$ as given in \eqref{eqn:semi_003}-\eqref{eqn:semi_004} 
is a Stieltjes matrix, under the 
respective conditions for the 1D and 2D problems.
\begin{align}\label{eqn:2DsemiCond}
|T| &\le \frac{6}{B_\eta} \min_{k = 1, 2,3}\cot\theta_{T,k}, 
~\text{ for each } T \in \cT, ~\text{ for }~ d=2,
\\ \label{eqn:1DsemiCond}
h_k^2 &\le \frac{6}{B_\eta}, ~ k = 1, 2, \ldots, n, ~\text{ for }~ d = 1.
\end{align}
\end{theorem}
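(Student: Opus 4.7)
The plan is to verify the three defining properties of a Stieltjes matrix for $A = S + M$: symmetry, positive definiteness, and nonpositive off-diagonal entries. Symmetry is immediate from \eqref{eqn:semi_004}, since the integrands $\nabla \varphi_i^T \nabla \varphi_j$ and $\varphi_i \varphi_j$ are both symmetric in $i,j$. For positive definiteness, $S$ is positive definite on $\cV$ because under the homogeneous Dirichlet data, $V^T S V = \int_\Omega |\nabla v_h|^2 \, \dif x$ vanishes only for $V=0$; and $M$ is positive semidefinite because
\begin{align*}
V^T M V = \int_\Omega \int_0^1 \frac{\partial b}{\partial \eta}(x, z(t))\, v_h^2 \, \dif t \, \dif x \ge 0
\end{align*}
by \eqref{eqn:A1:b_eta}. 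Hence $A = S + M$ is symmetric positive definite independently of the mesh conditions.

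The substantive content is the nonpositivity of off-diagonal entries $a_{ij}$, $i\ne j$, which I would establish element by element. Each such off-diagonal entry is a sum of contributions from the (at most two) elements containing the edge joining nodes $i$ and $j$, so it suffices to show each element-level contribution is nonpositive. In one dimension, on $\cI_k$ of length $h_k$, direct computation gives stiffness contribution $-1/h_k$ and a mass contribution bounded by $B_\eta h_k/6$ (using $\int_{\cI_k} \varphi_{k-1} \varphi_k \, \dif x = h_k/6$ and the upper bound \eqref{eqn:A1:b_eta}), so nonpositivity reduces to $h_k^2 \le 6/B_\eta$, which is exactly \eqref{eqn:1DsemiCond}. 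In two dimensions, on a triangle $T$ with interior angle $\theta_k$ opposite the shared edge, the identity \eqref{eqn:gradTgrad} combined with $|e_i||e_j|\sin\theta_k = 2|T|$ yields $\int_T \nabla \varphi_i^T \nabla \varphi_j\, \dif x = -\tfrac{1}{2} \cot \theta_k$, while standard computation gives $\int_T \varphi_i \varphi_j \, \dif x = |T|/12$. The element contribution to $a_{ij}$ is thus bounded above by $-\tfrac{1}{2}\cot\theta_k + B_\eta |T|/12$, and is nonpositive precisely when $|T| \le 6\cot\theta_k/B_\eta$. Taking the minimum over the three angles of $T$ yields \eqref{eqn:2DsemiCond}.

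The main obstacle is the two-dimensional element computation, because one must simultaneously account for the shape of the triangle (through $\cot\theta_k$) and the fact that the mass matrix entries are nonnegative, hence work against the desired sign of $a_{ij}$. The acuteness condition of Assumption \ref{A2:meshregularity} ensures $\cot\theta_k > 0$, so the stiffness contribution is strictly negative; the area restriction then guarantees the positive mass contribution cannot overcome it. Once each element contribution is shown to be nonpositive, summation gives $a_{ij} \le 0$ for every pair $i \ne j$, completing the verification that $A$ is a Stieltjes matrix.
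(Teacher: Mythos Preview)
Your proof is correct and follows essentially the same approach as the paper: both verify symmetry, positive definiteness, and nonpositive off-diagonal entries of $A=S+M$ via the element-level identities $\int_T \nabla\varphi_i^T\nabla\varphi_j\,\dif x = -\tfrac12\cot\theta_k$ (resp.\ $-1/h_k$) and $\int_T \varphi_i\varphi_j\,\dif x = |T|/((d+1)(d+2))$, together with the bound \eqref{eqn:A1:b_eta}. If anything, you are slightly more explicit than the paper in justifying positive definiteness via $V^TSV=\|\nabla v_h\|_{L^2}^2$ and $V^TMV\ge 0$, whereas the paper simply asserts it.
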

\begin{proof}
In $d$ dimensions $\int_T \varphi_i \varphi_j \, \dif x = |T|/(d+1)(d+2)$ for $i \ne j$,
so the summands of the off-diagonal entries of $M$ satisfy
\begin{align}\label{eqn:semi_005}
\int_T \int_0^1\frac{\partial b}{\partial \eta}(x,z(t))\varphi_i \varphi_j \, \dif t \, \dif x
\le  \frac{B_\eta|T|}{(d+1)(d+2)}, &~i \ne j,
\end{align}
and the diagonal entries of $M$ are nonnegative. 
By \eqref{eqn:gradTgrad} in 2D (trivially in 1D). 
The diagonal entries of $S$ are positive.
The off-diagonal entries of $S$ constructed by \eqref{eqn:semi_004}, satisfy
$\int_T \nabla \varphi_i^T \nabla \varphi_j \, \dif x  = -\cot \theta_{T,i,j}/2$ in 2D, 
where $\theta_{T,i,j}$ is the angle of triangle $T$ between between edges $e_i$ and 
$e_j$. In the 1D case, $\int_{\cI_k} \varphi_k'\varphi_{k-1}' \, \dif x = -1/h_k$.

Under the conditions \eqref{eqn:2DsemiCond} (respectively, \eqref{eqn:1DsemiCond}),
and the construction \eqref{eqn:semi_003}-\eqref{eqn:semi_004}, the
matrix $A$ is symmetric positive definite with positive diagonal and nonpositive 
off-diagonal entries.  Hence it is a Stieltjes matrix.
\end{proof}
It follows directly from Theorem \ref{thm:semi} and \eqref{eqn:semi_001} 
that the solution $W$ to $AW=F$ is nonpositive,
so that $w = u_1 - u_2 \le 0$. This method of proof is preferred for the semilinear
problem \eqref{eqn:PDEsemiL}, as it gives an improved constant in the mesh condition.
While it does not apply directly to the quasilinear problem \eqref{eqn:PDEstrong},
a variant using an $M$-matrix or otherwise nonsymmetric monotone matrix may be applicable.

\section{Conclusion}
In this paper, we proved comparision theorems in 1D and 2D for elliptic quasilinear
diffusion problems discretized by standard $P_1$ finite elements, significantly
extending the results of \cite{PoZh17a}.  
We found the discrete comparision principles hold based on conditions relating
the given problem data, information about the area and angles of the mesh, and the 
variance of the computed solution over each mesh element.
The proofs are more complicated than the comparison theorem for the
continuous problem, the main setback being the positive part in the difference
of two solutions does not lie in the finite element space.  
There remains a significant gap between the class of problems for which comparision
principles hold for the PDE and for the corresponding discrete problem.  
For the class of problems investigated here, the discrete comparison principle implies
the uniqueness of the solution to the discrete problem, based on efficiently computable
conditions.  These results are useful for $h$-adaptive algorithms, where the 
mesh presumably remains coarse away from steep gradients in the solution or 
(near) singularities in the data.
\label{sec:conclusion}

\bibliographystyle{abbrv}
\bibliography{refsTRN,M}

\end{document}